\newcommand{\Res}{\mathrm{Res}} 
\newcommand{\Ind}{\mathrm{Ind}} 
\newcommand{\cind}{\textrm{c-}\mathrm{Ind}} 
\newcommand{\Hom}{\mathrm{Hom}}
\newcommand{\R}{{\scriptstyle{\mathcal{O}}}}
\newcommand{\PP}{\mathfrak{p}}
\newcommand{\Zb}{Z_{\mathrm{b}}}
\newcommand{\ratk}{F}
\newcommand{\extk}{E}
\newcommand{\unk}{F^{un}}
\newcommand{\p}{\varpi}
\newcommand{\SL}{\mathrm{SL}}
\newcommand{\Sp}{\mathrm{Sp}}
\newcommand{\GL}{\mathrm{GL}}
\newcommand{\Gal}{\mathrm{Gal}}
\newcommand{\Z}{\mathbb{Z}} 
\newcommand{\GG}{\mathbf{G}} 
\newcommand{\LieG}{\mathfrak{g}}
\newcommand{\mz}{\mathfrak{z}}
\newcommand{\Lie}{\mathrm{Lie}}
\newcommand{\TT}{\mathbf{T}} 
\newcommand{\Stor}{\mathbf{S}} 
\newcommand{\Ztor}{\mathbf{Z}} 
\newcommand{\LieT}{\mathfrak{t}}
\newcommand{\Bset}{\Omega}
\newcommand{\apart}{{\mathcal{A}}}
\newcommand{\Aset}{\Omega_{\apart}}
\newcommand{\buil}{\mathcal{B}}
\newcommand{\C}{\mathcal{C}} 
\newcommand{\F}{\mathcal{F}} 
\newcommand{\KT}{\mathsf{K}}
\newcommand{\J}{\mathsf{J}}
\newcommand{\bmat}[1]{\begin{bmatrix} #1 \end{bmatrix}}
\newcommand{\ep}{\varepsilon}
\theoremstyle{plain}
\newtheorem{theorem}{Theorem}[section]
\newtheorem*{theorem*}{Theorem}
\newtheorem{lemma}[theorem]{Lemma}
\newtheorem{proposition}[theorem]{Proposition}
\newtheorem{cor}[theorem]{Corollary}
\theoremstyle{definition}
\newtheorem{definition}[theorem]{Definition}
\newtheorem{remark}[theorem]{Remark}
\theoremstyle{remark}
\newtheorem{egr}[theorem]{Example}
\newenvironment{example}{\begin{egr}}{\qedsymbol \end{egr}}
\numberwithin{equation}{section}
\numberwithin{figure}{section}
\begin{document}

\title{On the unicity of types for toral supercuspidal representations}
\author{Peter Latham}
\address{Department of Mathematics, King's College London, Strand, London, UK}
\email{peter.latham@kcl.ac.uk}

\author{Monica Nevins}
\address{Department of Mathematics and Statistics, University of Ottawa, Ottawa, Canada}
\email{mnevins@uottawa.ca}
\thanks{The second author's research is supported by NSERC Canada Discovery Grant 06294.}

\date{\today}

\begin{abstract}
For tame arbitrary-length toral, also called positive regular, supercuspidal representations of a simply connected and semisimple $p$-adic group $G$, constructed as per Adler-Yu, we determine which components of their restriction to a maximal compact subgroup are types.  We give conditions under which there is a unique such component, and then present a class of examples for which there is not, disproving the strong version of the conjecture of unicity of types on maximal compact open subgroups.  We restate the unicity conjecture, and prove it holds for the groups and representations under consideration under a mild condition on depth.
\end{abstract}

\maketitle

\section{Introduction}

Let $G$ be a connected reductive $p$-adic group.  
In \cite{Bernstein1984}, J.~Bernstein gives a decomposition of the category of smooth 
representations of $G$ into indecomposable full subcategories, called \emph{blocks}, that are indexed
by the inertial support of the irreducible representations they contain. Given an irreducible supercuspidal representation $\pi$ of $G$, a \emph{type} for $\pi$ is a pair $(K, \lambda)$ consisting of an
irreducible representation $\lambda$ of a compact open subgroup $K$ of $G$ such that $\Ind_K^G \lambda$ is a projective generator of the block containing $\pi$.  In this case, $\lambda$ occurs as a subrepresentation of $\pi|_K$ and we say $\pi$ \emph{contains} the type $(K,\lambda)$.  Types are now known to exist for many classes
of supercuspidal representations.  In particular, the work of J.~Adler \cite{Adler1998}, generalized by J.K.~Yu \cite{Yu2001}, shows that every essentially tame
supercuspidal representation contains a type; the work of J.-L.~Kim \cite{Kim2007} assures us that if the residual characteristic $p$ is sufficiently large, then all supercuspidal representations of $G$ are of this form.

Given a supercuspidal representation $\pi$ of $G$ containing a type $(K,\lambda)$, it is simple to
produce additional types: any $G$-conjugate of $(K,\lambda)$ is a type, as
is any pair $(K', \tau)$ where $K'$ is a compact open subgroup of $G$ containing $K$ and $\tau$ is
an irreducible representation of $K'$ that contains $\lambda$ upon restriction to $K$. A natural
question to ask is whether $\pi$ can contain any additional types, specifically on \emph{maximal} compact open subgroups, that are not related to $(K,\lambda)$ in this way.  It is expected that this should never happen, and the conjecture that
this is the case is known as the \emph{unicity of types}.  The name is due to V.~Pa\u{s}k\={u}nas \cite{Paskunas2005}, whose thesis extended G.~Henniart's appendix ``Sur l'unicit\'e des types pour $\GL_2$'' of the article \cite{BreuilMezard2002}.  The goal of this paper is to establish
the unicity of types for a class of essentially tame supercuspidal representations which we call toral, defined below.

The unicity conjecture is a theorem for $G=\GL_n(\ratk)$ \cite{Paskunas2005}, $G=\SL_2(\ratk)$ \cite{Latham2016}, for essentially tame representations of $G=\SL_n(\ratk)$ \cite{Latham2018}, and for depth-zero supercuspidal representations of any connected reductive $p$-adic group $G$ \cite{Latham2017}.  
In each of these cases, it was seen that a stronger property holds, namely,  that if $K\subset G$ is any maximal compact open subgroup,
then there exists, up to $G$-conjugacy, at most one type defined on $K$ for each supercuspidal
representation $\pi$ of $G$.  We will refer to this as the \emph{strong unicity property}.  While strong unicity implies unicity, in Section~\ref{S:nonunicity} we provide counterexamples to prove that they are in fact inequivalent in general.

From now on, let us specialize to the case that $G$ is a simply connected semisimple $p$-adic group.  Essentially tame supercuspidal representations are constructed from sequences of twisted Levi subgroups that split over a tamely ramified Galois extension, together with characters of these subgroups and a representation of the smallest twisted Levi subgroup.  We restrict our attention here to  those sequences for which the smallest twisted Levi subgroup is an anisotropic (also called elliptic) maximal torus of $G$.    
For the purposes of this paper we call these \emph{toral} supercuspidal representations, though we caution the reader that some authors reserve ``toral'' to mean the more restrictive case that the twisted Levi sequence has length $d=1$.  Relating to work of T.~Kaletha \cite{Kaletha},  F.~Murnaghan calls our representations ``positive regular'' (as justified in \cite{Murnaghan2017}).

The strategy for proving the unicity conjecture for supercuspidal representations of $G$ is as follows.  Let $\pi$ be an essentially tame supercuspidal representation and $(\KT,\kappa)$ a type as arising from the above construction.  Since $G$ is semisimple and simply connected, we have both that  $\cind_{\KT}^G\kappa$ is irreducible (hence equivalent to $\pi$) and that 
every maximal compact open subgroup of $G$ is the stabilizer $G_y$ of a vertex $y$ in the Bruhat-Tits building of $G$.  Thus for any such $G_y$ containing $\KT$, it follows directly that we have an induced type $(G_y,\Ind_{\KT}^{G_y}\kappa)$.

The conjecture of unicity of types therefore amounts to the statement that, up to $G$-conjugacy, all types for $\pi$ on a maximal compact open subgroup arise in this way.  
Strong unicity is the statement that furthermore any two types for $\pi$ on $G_y$ are conjugate by an element of $N_G(G_y)$; this is equivalent to the statement that $(\KT,\kappa)$ is not contained two distinct but conjugate maximal compact open subgroups.

 The restriction of $\pi$ to a maximal compact open subgroup $G_y$ 
  decomposes as an infinite direct sum of irreducible representations of $G_y$.  Describing these branching rules is a difficult open problem of interest in its own right.  Here, it suffices to note that with $G$ as above, the Bernstein block corresponding to $\pi$ is generated by $\pi$.  Therefore the types of $\pi$ supported on $G_y$ are exactly those irreducible components of $\pi|_{G_y}$ that do not occur in $\pi'|_{G_y}$ for any other (inequivalent) irreducible representation $\pi'$ of $G$.  Proving this, in turn, is made possible by the major work of J.~Hakim and F.~Murnaghan \cite{HakimMurnaghan2008} which establishes the equivalences among essentially tame supercuspidal representations entirely in terms of the Adler-Yu data used to construct them.

To state our main result (Theorem~\ref{T:unicity}), let $T$ be an anisotropic maximal torus of $G$ and let $\buil^T$ denote the fixed point set of $T$ acting on the Bruhat-Tits building $\buil(G)$ of $G$.  This set contains in particular a point $x$ which is the image of the building of $T$ in the building of $G$.  Let $c_T\geq 0$ be the \emph{simplicial radius} of $\buil^T$, relative to $x$, as defined in Section~\ref{4}.

\begin{theorem*}
Let $T$ be a tame elliptic maximal torus of $G$ and suppose $\pi$ is a supercuspidal representation of $G$ built from a datum containing $T$.  If the character $\phi_0$ of $T$ appearing in the datum has depth greater than $2c_T$, then $\pi$ satisfies the unicity conjecture relative to any maximal compact open subgroup of $G$.   Moreover, in this case, $\pi$ satisfies the strong unicity property if and only if $\buil^T$ consists precisely of the closure of a single facet in $\buil(G)$.
\end{theorem*}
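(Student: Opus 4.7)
The plan is to combine the characterization of types recalled in the introduction with the Hakim-Murnaghan classification of equivalences among essentially tame supercuspidals. Types for $\pi$ on a maximal compact open subgroup $G_y$ are exactly those irreducible components of $\pi|_{G_y}$ that fail to appear in $\pi'|_{G_y}$ for any inequivalent irreducible $\pi'$ of $G$. Since $\pi \cong \cind_{\KT}^G \kappa$ with $(\KT,\kappa)$ the Adler-Yu type attached to the given datum, Mackey decomposition expresses $\pi|_{G_y}$ as a sum indexed by $G$-conjugates of $(\KT,\kappa)$ that sit inside $G_y$, and every type on $G_y$ must occur inside some $\Ind_{\KT^g}^{G_y}\kappa^g$.

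First I would reduce to considering $y \in \buil^T$ up to $G$-conjugacy: the containment $\KT^g \subseteq G_y$ forces $T^g \subseteq G_y$, and since $T$ is elliptic this means $g^{-1}y$ lies in $\buil^T$. Second, for each such $y$ I would use Hakim-Murnaghan to show that any irreducible component of $\Ind_{\KT}^{G_y}\kappa$ which is \emph{not} a type for $\pi$ must also appear in $\pi'|_{G_y}$ for some inequivalent toral supercuspidal $\pi'$, which forces an intertwining between the Adler-Yu data of $\pi$ and $\pi'$. The depth hypothesis $\mathrm{depth}(\phi_0) > 2c_T$ is calibrated precisely so that any such intertwining forces $\pi'$ to be $G$-conjugate to $\pi$, producing a contradiction and yielding the unicity assertion.

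For the strong unicity dichotomy, recall from the introduction that strong unicity is equivalent to $(\KT,\kappa)$ not being contained in two distinct but $G$-conjugate maximal compact open subgroups. If $\buil^T$ is the closure of a single facet, then any $g \in G$ moving one realization of $(\KT,\kappa)$ inside $G_y$ to another can be adjusted by an element of $N_G(T)$ to land in $N_G(G_y)$, proving strong unicity. If $\buil^T$ properly contains two or more facets, one exhibits vertices $y_1, y_2 \in \buil^T$ lying in distinct facets with $G_{y_1}$ and $G_{y_2}$ $G$-conjugate but not related by any element of $N_G(G_{y_1})$, furnishing two types on $G_{y_1}$ which are not $N_G(G_{y_1})$-conjugate. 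The principal obstacle is the second step of the unicity argument: converting the numerical bound $2c_T$ into a clean exclusion of rival supercuspidals, which requires reconciling the depth of $\phi_0$ with the Moy-Prasad filtration supports on $\buil^T$ and with the Hakim-Murnaghan criterion for the intertwining of data.
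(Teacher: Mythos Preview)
Your Mackey decomposition is misstated, and the error propagates through the whole argument. The decomposition of $\pi|_{G_y}$ is indexed by \emph{all} double cosets $G_y\backslash G/\KT$, and the summand for $g$ is $\Ind_{G_y\cap{}^g\KT}^{G_y}{}^g\kappa$, not $\Ind_{{}^g\KT}^{G_y}{}^g\kappa$. For most $g$ one has ${}^g\KT\not\subseteq G_y$, so your claimed reduction to $g^{-1}y\in\buil^T$ via the implication ${}^g\KT\subseteq G_y\Rightarrow{}^gT\subseteq G_y$ never gets started: that implication is correct, but its hypothesis fails generically. The substantive content of the theorem is precisely to show that Mackey components with $g^{-1}y\notin\buil^T$ contain no types, and your proposal does not address this at all.

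The paper's mechanism for that step is concrete and quite different from what you sketch. Given $g$ with $g^{-1}y\notin\buil^T$, one finds a point $z$ on the geodesic $[x,g^{-1}y]$ lying in $\Bset(x,s_0)$ but outside $\buil^T$; the depth hypothesis $r_0>2c_T$ (equivalently $s_0>c_T$) is exactly the geometric condition guaranteeing such a $z$ exists. One then checks $\KT\cap G_{g^{-1}y}\subseteq(T\cap G_z)G_{x,s_0}$, and since $T\cap G_z\subsetneq T$ one can choose a nontrivial character $\xi$ of $T$ trivial on $T\cap G_z$, necessarily of depth less than $s_0<r_0$. Twisting the datum by $\xi$ produces (via Hakim--Murnaghan) an inequivalent supercuspidal $\pi(\Psi_\xi)$ that shares the same Mackey component, so nothing in that component can be a type. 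Thus the depth bound is not used, as you suggest, to force an unknown rival $\pi'$ to be conjugate to $\pi$; it is a geometric input allowing one to \emph{construct} a specific rival. Your second step, arguing by contradiction from a hypothetical $\pi'$ over which you have no control, has no traction: such a $\pi'$ need not be toral or even essentially tame, and Hakim--Murnaghan tells you nothing about it.
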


We note that the theorem holds without any hypothesis on the depth of $\phi_0$ when $T$ is \emph{unramified}; see Corollary~\ref{C:unramified}.

An essential ingredient of the proof is the analysis of the fixed points of $\buil(G)$ under the action of both the torus $T$, and of the inducing subgroup $\KT$, using particularly Lemma~\ref{L:stabilizers}.  As we discuss in Section~\ref{S:nonunicity}, while the hypothesis for strong unicity (for example, that $\buil^T = \overline{\{x\}}$) holds in many cases (notably, for $G=\SL_n$, for unramified tori, and for purely ramified Coxeter tori), it fails for many classes of anisotropic tori in general. In these cases, the number of inequivalent types of the form $(G_y,\lambda)$ can grow arbitrarily large as the rank of $G$ increases; see Example~\ref{Ex:2}.

This paper is organized as follows.  We establish our notation in Section~\ref{1}, including particularly of the building $\buil(G)$ and of Moy-Prasad filtration subgroups, and recall the definition of the essentially tame toral supercuspidal representations we study in Section~\ref{S:construction}.   In Section~\ref{strategy} we prove a proposition about the inequivalence of toral supercuspidal representations under certain twists, based on \cite[Cor 6.10]{HakimMurnaghan2008} (recalled here as Lemma~\ref{HMCor}) and some results of F.~Murnaghan in \cite{Murnaghan2011}.  In Section~\ref{4} we establish a key result relating Moy-Prasad filtration subgroups to stabilizers of subsets of an apartment, generalizing a proposition in \cite{Nevins2014}, and we define the notion of the simplicial radius of a bounded subset of $\buil(G)$.  In Section~\ref{S:Mackey} we recall the Mackey decomposition of $\pi|_{G_y}$ for a vertex $y$ and prove two general results about components that can contain types.  
Our main results on unicity are proven in Section~\ref{S:unicity}.  We discuss the success and failure of strong unicity, and provide examples where it fails, in Section~\ref{S:nonunicity}.

Several interesting problems remain open.  The hypothesis on the depth of the character of $T$ given in the statement of Theorem~\ref{T:unicity} arises as a result of our method of proof.  There is no such restriction for the unicity theorems on $G=\GL_n$ or on $G=\SL_n$, where a different argument (particular to type A) was employed by V.~Pa\u{s}k\={u}nas to address the small-depth case.  Finally, not all essentially tame supercuspidal representations arise from toral data; the consideration of general twisted Levi sequences is necessary to completely resolve the unicity conjecture in these cases.

\subsection*{Acknowledgements} The second author warmly thanks Anne-Marie Aubert, Manish Mishra, Alan Roche and Steven Spallone for the invitation to the excellent conference \emph{Representation theory of $p$-adic groups} at IISER Pune, India.   The stimulating environment of the workshop contributed significantly to this article; in particular, fellow participant Jeff Adler provided invaluable insight into tori and buildings, and he offered up the torus of Example~\ref{Ex:1}.

\section{Notation}\label{1}

Let $\ratk$ be a nonarchimedean local field of residual characteristic $p$, with integer ring $\R$ and maximal ideal $\PP$.  We normalize the valuation 
$\nu$ on $\ratk$ so that $\nu(\ratk^\times) =\Z$; if $\extk$ is an extension field of $\ratk$ then we also denote by $\nu$ the unique extension of this valuation to $\extk$.  Fix an additive character $\Lambda$ of $\ratk$ that is nontrivial on $\R$ but trivial on $\PP$.  If $H\subset G$ are groups and $g\in G$, let ${}^gH = \{ghg^{-1}\mid h\in H\}$ and for any representation $\tau$ of $H$ let ${}^g\tau$ denote the corresponding representation of ${}^gH$.  If $\sigma$ is a representation of $G$ we write $\Res_H\sigma$ for its restriction to $H$.

Let $\GG$ be a semisimple simply connected linear algebraic group defined over $\ratk$ and let $G = \GG(\ratk)$.   Let $\buil(G)=\buil(\GG,\ratk)$ denote the reduced building of $\GG$ over $\ratk$; since $\GG$ is semisimple it coincides with the enlarged building.  Since $\GG$ is simply connected, the stabilizer of a point $x$ in the building, $G_x$, coincides with the parahoric subgroup $G_{x,0}$ associated to the facet containing $x$.  Both hypotheses together imply that the maximal compact open subgroups of $G$ are exactly the maximal parahoric subgroups of $G$, that is, $G_y$ for each vertex $y$ of the building.

Let $\Stor$ be a maximal $\ratk$-split torus in $\GG$ defined over $\ratk$.  Fix a maximal unramified extension $\unk$ of $\ratk$, and let $\Stor'$ be a maximal $\unk$-split torus of $\GG$ defined over $\ratk$ and containing $\Stor$.  Let $\Ztor$ be the centralizer of $\Stor'$ in $\GG$, which is a maximal torus of $\GG$ defined over $\ratk$.
  Denote by $\Phi = \Phi(\GG,\Stor)$ the root system of $\GG$ relative to $\Stor$ and by $\apart = \apart(\Stor, \ratk)$ the apartment of $\buil(\GG,\ratk)$ corresponding to $\Stor$.   Let $\Psi_\Phi$ be the set of affine roots, corresponding to our choice of valuation $\nu$; these are functions on $\apart$. 
Denote the root subgroup of $\GG$ corresponding to $\alpha \in \Phi$ by $\GG_\alpha$.   For $\alpha \in \Phi$ we set $G_\alpha=\GG_{\alpha}(\ratk)$; this group admits a filtration by compact open subgroups $G_\psi$, as $\psi$ runs over the affine roots with gradient $\dot\psi=\alpha$.  For more details, see \cite[\S 4]{BruhatTits1984} or the careful exposition in \cite[\S 2]{Fintzen2015}.
 
 Let $S=\Stor(\ratk)$, $S'=\Stor'(\ratk)$ and $Z=\Ztor(\ratk)$.  Let $\Zb$ be the maximal bounded subgroup of $Z$.   Recall that any torus $T=\TT(\ratk)$ admits a natural filtration by subgroups $T_r$ for $r\geq 0$, and its Lie algebra $\LieT$ is filtered by lattices $\LieT_r$ for $r\in \mathbb{R}$.  In particular, $Z$ admits a natural filtration by subgroups $Z_{r}$ for $r\geq 0$, with $Z_0\subseteq \Zb$. 
 
 For any $x\in \apart$ and $r\geq 0$, A.~Moy and G.~Prasad \cite{MoyPrasad1994} defined $G_{\alpha,x,r} = \langle G_\psi \mid \psi(x)\geq r, \dot\psi=\alpha \rangle$ and thus filtration subgroups
 $$
 G_{x,r}=\langle Z_{r}, G_{\alpha,x,r} \mid \alpha \in \Phi\rangle 
  $$
  of $G_x$.
We set $G_{x,r+} = \cup_{s>r}G_{x,s}$.  They similarly defined lattices $\LieG_{x,r}$ in $\LieG=\Lie(G)(\ratk)$ and $\LieG^*_{x,r}$ in  $\LieG^*$, indexed by $r\in \mathbb{R}$.  Conjugation by $G$ allows us to extend these definitions to any $x\in \buil(G)$.
  
We say that a group $\GG'$ (or its set of $\ratk$-points $G' = \GG'(\ratk)$) is a \emph{twisted Levi subgroup} of $\GG$ if there is an extension $\extk$ of $\ratk$ such that  $\GG'$ is an $\extk$-Levi subgroup of $\GG$ defined over $\ratk$.   We say $\GG'$ is tamely ramified if $\GG'$ (and thus $\GG$) splits over a tamely ramified extension.

Suppose now that $\GG'$ is a (tamely ramified) twisted Levi subgroup of $\GG$ and let $\TT$ be a maximal torus of $\GG'$; let $\extk$ be a tamely ramified splitting field of $\TT$ over $\ratk$.
Let $\LieG'$ denote the $\ratk$-points of $\Lie(\GG')$, and denote by ${\mz'}^*$ the $\ratk$-points of the dual of the center of $\Lie(\GG')$.

Let $r>0$.  Following \cite[Definition 3.7]{HakimMurnaghan2008}, an element $X^* \in {\mz'}^*_{-r}$ is called \emph{$G$-generic of depth $-r$} if it satisfies the conditions \textbf{GE1} and \textbf{GE2} of \cite[\S8]{Yu2001}. By \cite[Lemma 8.1]{Yu2001}, if $p$ is not a torsion prime for the root datum (in the sense of \cite{Springer1977}) of $(\GG,\TT)$, then these conditions reduce to the requirement that $\nu(\langle X^*, H_a\rangle) = -r$, for each $a \in \Phi(\GG,\TT)\setminus\Phi(\GG',\TT)$, where $H_a \in \Lie(\GG)(\extk)$ is the coroot associated to $a$.   

 Fix also a Moy-Prasad isomorphism \cite[\S 2.6]{HakimMurnaghan2008}
$$
e \colon G'_{x,r}/G'_{x,r+} \to \LieG'_{x,r}/\LieG'_{x,r+}.
$$
A character $\phi$ of $G'$ of depth $r$ is said to be \emph{realized by an element $X^* \in {\mz'}^{*}_{-r}$ on $G'_{x,r}$} if 
for every $Y \in \LieG'_{x,r}$, 
$$
\phi(e(Y+\LieG'_{x,r+})) = \Lambda(\langle X^*, Y\rangle).
$$
By \cite[Definition 3.9]{HakimMurnaghan2008}, $\phi$ is \emph{$G$-generic (relative to $x$) of depth $r>0$} if $\phi$ is trivial on $G'_{x,r+}$, nontrivial on $G'_{x,r}$, and is realized on $G'_{x,r}$ by an element $X^* \in {\mz'}^{*}_{-r}$ that is $G$-generic of depth $-r$.  
A particular consequence of the $G$-genericity of a character $\phi$ of $G'$ is that for any $g\in G$, we have that ${}^g\phi$ and $\phi$ coincide on $G'_{x,r}$ if and only if $g\in G'$ \cite[Lemma 8.3]{Yu2001}.

 \section{The construction of toral supercuspidal representations}\label{S:construction}

Fix $G$ semisimple and simply connected, and retain all the notation above.  Following \cite[\S3]{Yu2001} and \cite[\S3.1]{HakimMurnaghan2008}, we define a \emph{positive-depth generic toral supercuspidal datum of length $d$} of $G$ (hereafter: \emph{toral supercuspidal datum}) to be a pair $(\vec{G}, \vec{\phi})$, where
 \begin{itemize}
\item $\vec{G} = (G^0\subsetneq G^1 \subsetneq \cdots \subsetneq G^d=G)$, for some $d \geq 1$, is a sequence of tamely ramified twisted Levi subgroups, such that in particular $G^0=T$ an anisotropic maximal torus in $G$;  
\item $\vec{\phi} = (\phi_0, \phi_1, \ldots, \phi_{d-1})$ where $\phi_i$ is a $G^{i+1}$-generic character of $G^i$ of depth $r_i$, and these real numbers satisfy $0<r_0<r_1 < \cdots < r_{d-1}$.
\end{itemize}
For convenience we write $s_i = r_i/2$ for each $i = 0, \ldots, d-1$.   
Given a tamely ramified twisted Levi subgroup $\GG'$ of $\GG$ that is defined over $\ratk$,  one may embed $\buil(\GG',\ratk)$ into $\buil(\GG,\ratk)$ with canonical image.  Since $T$ is anisotropic, the image of $\buil(T)$ in $\buil(G)$ consists of a single point $x$, and it is this point relative to which the characters $\phi_i$ are $G^{i+1}$-generic. 
Thus a toral supercuspidal datum implies the datum $(\vec{G}, \vec{\phi}, d, \vec{r}, \vec{s}, x)$, and we will take this extra data for granted where there is no possibility of confusion.

Let $\Psi = (\vec{G},\vec{\phi})$ be a toral supercuspidal datum.  Set $\KT^0=G^0_x=T$.  For each $1\leq i\leq d$ define  
$$
\KT^i:= G^0_{x}G^1_{x,s_0}\cdots G^{i}_{x,s_{i-1}} 
\subseteq TG_{x,s_0} \subseteq G_x.
$$
The groups in these products have large pairwise intersections, so we next define subgroups $\J^i$ and $\J^i_+$ which will have the property that $\KT^i\J^{i+1}=\KT^{i+1}$ and $\KT^i\cap \J^{i+1} = \KT^i\cap \J^{i+1}_+ = G^i_{x,r_i}$. 

Let $\extk$ be a tamely ramified Galois extension of $\ratk$ over which $\TT$ splits.  For each $0\leq i\leq d$, let $\Phi_i = \Phi(\GG^i,\TT)$ be the corresponding root system.   For $0 \leq i < d$ let $\J(\extk)^{i+1}$ be the group generated by $\TT(\extk)_{r_{i}}$ together with the root subgroups  
$\GG_a(\extk)_{x,r_{i}}$ for $a\in \Phi_{i}$ and $\GG_b(\extk)_{x,s_{i}}$ for $b\in \Phi_{i+1}\setminus \Phi_{i}$.  Similarly, let
$$
\J(\extk)^{i+1}_+ = \langle \TT(\extk)_{r_{i}}, \GG_a(\extk)_{x,r_{i}}, \GG_b(\extk)_{x,s_{i}+} \mid a\in \Phi_{i}, b\in \Phi_{i+1}\setminus \Phi_{i}  \rangle.
$$
 These subgroups of $\GG^{i+1}(\extk)$ are invariant under $\Gal(\extk/\ratk)$ and we set
$\J^{i+1}=\J(E)^{i+1}\cap G^{i+1}$, $\J^{i+1}_+ = \J(\extk)^{i+1}_+\cap G^{i+1}$.

Next for each $0\leq i <d$, the subdatum $((G^i,G^{i+1}), \phi_i)$ is used to construct a representation $\phi_i'$ of $\KT^{i+1}$.  The first step is to extend the restriction of $\phi_i$ to $G^i_{x,r_i}$ trivially to a character $\hat\phi_i$ of  $J^{i+1}_+$ \cite[\S4]{Yu2001}.
 Next, when $\J^{i+1}_+=\J^{i+1}$, define a character $\phi_i'$ of $\KT^{i+1}$ by 
\begin{equation}\label{E:phi'1}
\phi_i'(kj)=\phi_i(k)\hat\phi_i(j),
\end{equation}
for each $k\in \KT^i$, $j\in \J^{i+1}$.
 On the other hand, when $\J^{i+1}_+ \neq \J^{i+1}$, then instead one uses $\hat\phi_i$ 
 to construct a Heisenberg-Weil representation $\omega$ of $\KT^i\ltimes \J^{i+1}$, and then define the representation $\phi_i'$ on $k\in \KT^i$, $j\in \J^{i+1}$ by
\begin{equation}\label{E:phi'2}
\phi_i'(kj) = \phi_i(k)\omega(k,j).
\end{equation}

Now set $\KT = \KT^d$.  There is a well-defined inflation process extending each $\phi_i'$ trivially across the remaining subgroups to give a representation of $\KT$, which we denote $\kappa_i=\inf_{\KT^{i+1}}^{\KT}\phi_i'$  \cite[\S3.4]{HakimMurnaghan2008}. Note that this representation $\kappa_i$ is independent of any other characters $\phi_j$, $j\neq i$, in the datum \cite[Proposition 3.26]{HakimMurnaghan2008}.

Putting these together, we obtain a representation $\kappa(\Psi)= \kappa_{0}\otimes \cdots \otimes \kappa_{d-1}$ of $\KT$ such that $\pi (\Psi) = \cind_{\KT}^G \kappa(\Psi)$ is an irreducible supercuspidal representation of $G$ of depth $r=r_{d-1}$ \cite[Theorem 15.1]{Yu2001}.  It then follows that $(\KT,\kappa(\Psi))$ is a type for $\pi(\Psi)$.

\section{Results on equivalence}\label{strategy}

We begin by noting when two toral supercuspidal data give rise to equivalent representations, from \cite[Cor 6.10]{HakimMurnaghan2008}.     

\begin{lemma}[Hakim-Murnaghan] \label{HMCor}
Let $\Psi = (\vec{G},\vec{\phi})$ and $\dot\Psi = (\vec{\dot{G}}, \vec{\dot{\phi}})$ be two toral supercuspidal data.  Then  $\pi(\Psi) \cong \pi(\dot\Psi)$ if and only if there exists some $g\in G$ such that $G^0 = {}^g\dot{G}^0$ and $\Res_{G^0}(\phi_0 \phi_1 \cdots \phi_{d-1})=\Res_{G^0}{}^g(\dot{\phi}_0 \dot{\phi}_1 \cdots \dot{\phi}_{d-1})$.   In this case, $G^i={}^g\dot{G}^i$ for all $i\geq 0$.
\end{lemma}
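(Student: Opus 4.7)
Since this lemma is stated as a specialization of \cite[Cor 6.10]{HakimMurnaghan2008} to the toral case, my plan is to extract their result and explain why, in this setting, its several $G$-equivalence conditions collapse to the single clean criterion above. Hakim--Murnaghan's general theorem describes when two Yu-style data produce equivalent supercuspidals by means of a list of $G$-equivalences on the twisted Levi sequences and on the characters, considered modulo a \emph{refactorization} operation that does not change the resulting representation. What I want to trace is what each of their conditions says when $G^0$ is an anisotropic maximal torus.

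For the ``if'' direction, suppose $g\in G$ satisfies $G^0 = {}^g\dot{G}^0$ and the restrictions of $\phi_0\phi_1\cdots\phi_{d-1}$ and ${}^g(\dot\phi_0\dot\phi_1\cdots\dot\phi_{d-1})$ agree on $G^0$. After replacing $\dot\Psi$ by its $g$-conjugate we may take $G^0=\dot G^0=T$. Matching the depths appearing on the Moy--Prasad filtration of $T$ and invoking the genericity conditions \textbf{GE1}--\textbf{GE2} iteratively recovers the twisted Levi sequence: the smallest twisted Levi subgroup containing $T$ on which the top-depth piece of the product character is realized by a $G$-generic element is forced to be $G^{d-1}$, and one descends inductively to pin down each $G^i$. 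With Levi sequences aligned and the character products equal on $T$, Hakim--Murnaghan's refactorization calculus produces an explicit $\KT$-intertwining from $\kappa(\Psi)$ to (a conjugate of) $\kappa(\dot\Psi)$ via \eqref{E:phi'1} and \eqref{E:phi'2}, whence $\pi(\Psi)\cong\pi(\dot\Psi)$.

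For the ``only if'' direction, assume $\pi(\Psi)\cong\pi(\dot\Psi)$. Frobenius reciprocity yields a nonzero intertwining between $\kappa(\Psi)$ and some $G$-conjugate of $\kappa(\dot\Psi)$ on the intersection of two conjugate $\KT$'s. Restricting this intertwining to $G^0_{x,r_0}$ and invoking the genericity principle recalled at the end of Section~\ref{1} — namely, that ${}^g\phi_0\equiv\phi_0$ on $G^0_{x,r_0}$ forces $g\in G^0$ — yields first the $G$-conjugacy of the anisotropic tori. The depth filtration structure of $\kappa(\Psi)$ and $\kappa(\dot\Psi)$, built layer by layer from characters of strictly increasing depth $r_0<r_1<\cdots<r_{d-1}$, then lets us peel off each $\phi_i$ in turn; genericity of $\phi_i$ relative to $G^{i+1}$ pins down $G^{i+1}$ as the connected centralizer of the appropriate generic element, giving $G^i={}^g\dot G^i$ for every $i$. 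The residual intertwining condition on the characters finally translates, after refactorization, into equality of the product characters restricted to $G^0$.

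The principal obstacle is precisely the refactorization phenomenon: a single representation $\pi$ can arise from many distinct data $(\vec G,\vec\phi)$ that differ in how the generic character information is distributed among the $\phi_i$, and the content of Hakim--Murnaghan's theorem is that this is the sole source of nonuniqueness. In the toral case the saving feature is that $G^0=T$ is abelian, so every admissible refactorization preserves the scalar function $\phi_0\phi_1\cdots\phi_{d-1}|_T$; verifying this invariance, and checking that no finer information survives when restricted to the torus, is what reduces the multi-layer Hakim--Murnaghan criterion to the single condition stated here.
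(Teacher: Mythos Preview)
The paper gives no proof of this lemma at all: it is stated as a direct citation of \cite[Cor~6.10]{HakimMurnaghan2008}, attributed to Hakim--Murnaghan in the very name of the lemma, and the text moves immediately on to Lemma~\ref{L:Murn}. So there is no ``paper's own proof'' to compare against.

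Your sketch is a reasonable expository gloss on why the general Hakim--Murnaghan criterion collapses in the toral case, and you correctly isolate the essential simplification: since $G^0=T$ is an anisotropic torus, there is no depth-zero cuspidal datum $\rho$ to worry about, and refactorization preserves the product character $\phi_0\phi_1\cdots\phi_{d-1}$ restricted to $T$, so that this single scalar invariant (up to $G$-conjugacy of $T$) captures the full equivalence class. The recovery of the twisted Levi sequence from the product character on $T$ that you outline is exactly the content of \cite[Propositions~5.4,~5.6]{Murnaghan2011}, which the paper invokes explicitly in the proof of Proposition~\ref{P:inequiv} rather than here; you are effectively front-loading that argument. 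This is fine, but be aware that your ``if'' direction is more hand-waved than the ``only if'': the statement that one can read off each $G^i$ from the depth jumps of the product character on $T$ presupposes that the genericity element realizing the top piece is uniquely determined up to the right ambiguity, and making this precise is nontrivial --- it is why the paper defers to Murnaghan's refactorization propositions rather than arguing from scratch. As written, your proposal is a correct heuristic outline, not a self-contained proof, which is appropriate given that the paper treats the result as a black-box citation.
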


Now suppose $\Psi = (\vec{G},\vec{\phi})$ is a toral supercuspidal datum of length $d$ with $T=G^0$.  Let $\xi$ be a character of $T$ of depth strictly less than $r_0$.  The $G^1$-genericity of the character $\phi_0$ of $G^0$ of depth $r_0$ depends only on $\Res_{G^0_{x,r_0}}\phi$.   Since $\xi\phi_0$ and $\phi_0$ coincide on $G^0_{x,r_0}$, we conclude that $\xi\phi_0$ is also a $G^1$-generic character of $G^0$ of depth $r_0$. Thus
$$
\Psi_\xi = (\vec{G}, (\xi\phi_0, \phi_1,\ldots, \phi_{d-1}))
$$
is another toral supercuspidal datum.

\begin{lemma}\label{L:Murn}
The character $\xi$ inflates to a character $\overline{\xi}$ of $\KT$ such that
$$
\kappa(\Psi_\xi) = \overline{\xi}\;\kappa(\Psi).
$$
\end{lemma}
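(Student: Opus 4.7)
The plan is to reduce the lemma to the first factor $\kappa_0$, construct $\overline{\xi}$ inductively, and then verify the key identity $(\xi\phi_0)'=\overline{\xi}|_{\KT^1}\cdot\phi_0'$ on $\KT^1$.  Since each $\kappa_i$ for $i\geq 1$ depends only on $\phi_i$ (as noted in the excerpt via \cite[Prop.~3.26]{HakimMurnaghan2008}), passing from $\Psi$ to $\Psi_\xi$ affects only $\kappa_0$.  It therefore suffices to produce $\overline{\xi}$ and show $\kappa_0(\Psi_\xi)=\overline{\xi}\,\kappa_0(\Psi)$; tensoring with the unchanged factors gives the lemma.

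To construct $\overline{\xi}$, I would set $\overline{\xi}^{(0)}=\xi$ on $\KT^0=T$ and, using $\KT^{i+1}=\KT^i\J^{i+1}$, extend by $\overline{\xi}^{(i+1)}(kj)=\overline{\xi}^{(i)}(k)$ for $k\in\KT^i$, $j\in\J^{i+1}$.  Because $\KT^i$ normalizes $\J^{i+1}$, the homomorphism property reduces to showing $\overline{\xi}^{(i)}$ is trivial on the intersection $\KT^i\cap\J^{i+1}=G^i_{x,r_i}$.  This latter group is generated by $T_{r_i}$ together with root groups $G_{\alpha,x,r_i}$ for $\alpha\in\Phi_i$; on $T_{r_i}\subseteq T_{r_0}$ the character $\overline{\xi}^{(i)}$ agrees with $\xi$, which is trivial since $\mathrm{depth}(\xi)<r_0\leq r_i$.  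For each such $\alpha$, let $j_0\geq 1$ be minimal with $\alpha\in\Phi_{j_0}$; then $j_0\leq i$ and $G_{\alpha,x,r_i}\subseteq G_{\alpha,x,s_{j_0-1}}\subseteq\J^{j_0}$, on which $\overline{\xi}^{(i)}$ is trivial by the inductive hypothesis.  Setting $\overline{\xi}=\overline{\xi}^{(d)}$ yields a character of $\KT$ extending $\xi$ and trivial on every $\J^l$.

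For the identity, any element of $\KT^1=T\J^1$ has the form $tj$ with $t\in T$, $j\in\J^1$.  Since $T\cap\J^1=T_{r_0}$ and $\xi$ is trivial there, the $\J^1_+$-extensions $\widehat{\xi\phi_0}$ and $\hat\phi_0$ coincide.  In the case $\J^1_+=\J^1$, formula \eqref{E:phi'1} gives directly $(\xi\phi_0)'(tj)=\xi(t)\phi_0(t)\hat\phi_0(j)=\overline{\xi}(tj)\phi_0'(tj)$.  In the case $\J^1_+\neq\J^1$, the Heisenberg-Weil representation $\omega$ in \eqref{E:phi'2} depends only on $\hat\phi_0|_{\J^1_+}$, so the same calculation applies.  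Since $\kappa_0=\inf_{\KT^1}^{\KT}\phi_0'$ extends $\phi_0'$ trivially across $\J^2,\ldots,\J^d$ and $\overline{\xi}$ is itself trivial on these groups, inflating the identity produces $\kappa_0(\Psi_\xi)=\overline{\xi}\,\kappa_0(\Psi)$.  The main obstacle lies in the inductive step for $\overline{\xi}$: one must track how each root group filtration nests inside an earlier $\J^{j_0}$, and it is precisely here that the depth hypothesis on $\xi$ is used.
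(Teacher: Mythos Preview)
Your argument is correct and follows the same route as the paper: reduce to $\kappa_0$ via the independence of $\kappa_i$ ($i\geq 1$) from $\phi_0$, show $(\xi\phi_0)'=\xi'\phi_0'$ on $\KT^1$ using that $\xi\phi_0$ and $\phi_0$ share $\hat\phi_0$ (and, when $\J^1_+\neq\J^1$, the Heisenberg--Weil lift $\omega$), and then inflate to $\KT$. The only difference is in the last step: the paper sets $\overline{\xi}=\inf_{\KT^1}^{\KT}\xi'$ directly, invoking the inflation process of \cite[\S3.4]{HakimMurnaghan2008} already used to define $\kappa_0=\inf_{\KT^1}^{\KT}\phi_0'$, whereas you rebuild this inflation by hand via the inductive chain $\overline{\xi}^{(i)}$. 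Your version is more self-contained but also does more work than necessary, since the well-definedness of $\inf_{\KT^1}^{\KT}$ is already available.

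One technical caution in your inductive step: the assertion that $G^i_{x,r_i}$ is generated by $T_{r_i}$ and root subgroups $G_{\alpha,x,r_i}$ for $\alpha\in\Phi_i$ mixes $\ratk$-points with the root system $\Phi_i=\Phi(\GG^i,\TT)$ taken over the splitting field $\extk$. The decomposition and the containment $G_{\alpha,x,s_{j_0-1}}\subseteq\J^{j_0}$ only make literal sense over $\extk$ (compare the definition of $\J(\extk)^{j_0}$); to make the argument rigorous you should run it for $\GG^i(\extk)_{x,r_i}$ and $\J(\extk)^{j_0}$ and then take $\Gal(\extk/\ratk)$-fixed points, exactly as in the construction of $\J^{j_0}$ itself. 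This is a routine repair, not a gap in the strategy.
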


\begin{proof}
Since $\xi$ is a character of $T=G^0=\KT^0$ of depth less than $r_0$, it is trivial on $G^0_{x,r_0}$.  Since $\KT^1=\KT^0\J^1$ and $\KT^0\cap \J^1 = G^0_{x,r_0}$, $\xi$ may be uniquely inflated to a character $\xi' = \inf_{\KT^0}^{\KT^1}\xi$ of $\KT^1$ that is trivial on $\J^1$. In particular, for any $k\in \KT^0$, $j\in \J^{1}$, we have $\xi'(kj)=\xi(k)$.

Since $\phi_0$ and $\xi\phi_0$ coincide on $G^0_{x,r_0}$, we may denote the extension to $\J^1_+$ of their common restriction by $\hat\phi_0$ and, if applicable, the corresponding Heisenberg-Weil representation of $\KT^0 \ltimes \J^1$  by $\omega$.  
Using now  \eqref{E:phi'1} and \eqref{E:phi'2}, as applicable, to construct $(\xi\phi_0)'$ and $\phi_0'$, we deduce that in both cases $(\xi\phi_0)' = \xi' \phi_0'$.

Set $\overline{\xi} = \inf_{\KT^1}^{\KT}\xi'$; then  $\inf_{\KT^1}^{\KT}\xi'\phi_0' = \overline{\xi}\kappa_0$.  Since for $i>0$, $\kappa_i$ depends only on the character $\phi_i$, we conclude that $\overline{\xi}$ factors out of the tensor product to yield $
\kappa(\Psi_\xi) = \overline{\xi}\kappa(\Psi),
$ as desired.
\end{proof}

\begin{proposition} \label{P:inequiv}
Let $\Psi = (\vec{G},\vec{\phi})$ be a toral supercuspidal datum and suppose $\xi$ is a character of $T=G^0$ of depth less than that of $\phi_0$.  If $\Psi_\xi = (\vec{G}, (\xi\phi_0, \phi_1,\ldots,\phi_{d-1}))$,  then $\pi(\Psi_\xi)$ is an irreducible supercuspidal representation of $G$ and it is equivalent to $\pi(\Psi)$ if and only if $\xi$ is trivial. 
\end{proposition}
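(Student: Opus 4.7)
The first claim — irreducibility of $\pi(\Psi_\xi)$ — is immediate: since $\xi$ has depth strictly less than $r_0$, it is trivial on $G^0_{x,r_0}=T_{r_0}$, so $\xi\phi_0$ coincides there with $\phi_0$ and retains $G^1$-genericity of depth $r_0$. Hence $\Psi_\xi$ is a toral supercuspidal datum and $\pi(\Psi_\xi)$ is irreducible supercuspidal by~\cite[Theorem~15.1]{Yu2001}. If $\xi$ is trivial then $\Psi_\xi=\Psi$, giving the ``if'' direction.

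For the converse, the plan is to assume $\pi(\Psi)\cong\pi(\Psi_\xi)$ and apply Lemma~\ref{HMCor} to obtain $g\in G$ with $T={}^gT$ and $G^i={}^gG^i$ for all $i$, satisfying
\begin{equation*}
\prod_{i=0}^{d-1}\phi_i \;=\; ({}^g\xi)\cdot\prod_{i=0}^{d-1}{}^g\phi_i \quad\text{on }T.
\end{equation*}
As $T$ is anisotropic, $N_G(T)$ fixes $x$. I then argue by descending induction on $i$ that $g\in G^i$; once $g\in G^0=T$, the abelianness of $T$ makes every conjugation in the displayed equation trivial, collapsing it to $\xi=1$.

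The inductive step at level $i$ goes as follows. Assuming ${}^g\phi_j=\phi_j$ on $T$ for every $j>i$, these factors cancel, and on restricting to $T_{r_i}$ every remaining factor of depth less than $r_i$ (including $\xi$, ${}^g\xi$, and $\phi_j$, ${}^g\phi_j$ for $j<i$) becomes trivial, leaving $\phi_i={}^g\phi_i$ on $T_{r_i}$. Now $\phi_i$ is realized on $G^i_{x,r_i}$ by a generic $X_i^*\in(\mz^i)^*_{-r_i}$ and ${}^g\phi_i$ by its $g$-conjugate, still in $(\mz^i)^*_{-r_i}$ since $g\in N_G(G^i)$. The essential point, drawing on structural input from~\cite{Murnaghan2011}, is that $\mz^i$ is orthogonal under the natural pairing to every root space $\LieG^i_a$ for $a\in\Phi(\GG^i,\TT)$, so via the Moy-Prasad isomorphism the characters $\phi_i$ and ${}^g\phi_i$ on $G^i_{x,r_i}/G^i_{x,r_i+}$ are both determined by their values on the toral part $T_{r_i}/T_{r_i+}$. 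Hence $\phi_i={}^g\phi_i$ on all of $G^i_{x,r_i}$, and the consequence of $G$-genericity recalled from~\cite[Lemma~8.3]{Yu2001} yields $g\in G^i$. Since any character of $G^i$ is invariant under inner automorphisms of $G^i$, it follows that ${}^g\phi_i=\phi_i$ on $T$, sustaining the induction. The main obstacle is establishing this promotion step — from agreement on $T_{r_i}$ to agreement on $G^i_{x,r_i}$ — rigorously.
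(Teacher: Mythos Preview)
Your proposal is correct and follows essentially the same descending-induction strategy as the paper: use Lemma~\ref{HMCor} to get $g\in N_G(T)$ normalizing each $G^i$, then step down through the twisted Levi sequence by invoking $G^{i+1}$-genericity of $\phi_i$ via \cite[Lemma~8.3]{Yu2001} to force $g\in G^i$, finally landing in $T$ where abelianness yields $\xi=1$. The only packaging difference is the promotion step from $T_{r_i}$ to $G^i_{x,r_i}$: the paper invokes the refactorization machinery of \cite[Propositions~5.4 and 5.6]{Murnaghan2011} as a black box, whereas you unpack it directly by observing that both $\phi_i$ and ${}^g\phi_i$ are realized by elements of $(\mz^i)^*_{-r_i}$, which annihilate the root-space summands of $\LieG^i_{x,r_i}/\LieG^i_{x,r_i+}$ and are therefore determined by their restriction to $\LieT_{r_i}/\LieT_{r_i+}$ --- a valid argument under the tameness hypotheses in force.
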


\begin{proof}
Suppose that $\pi(\Psi) \cong \pi(\Psi_\xi)$.  In the setting of Lemma~\ref{HMCor}, this means there is some $g \in N_G(T)$ such that $\Res_{T}(\phi_0 \phi_1 \cdots \phi_{d-1})=\Res_{T}{}^g(\xi \phi_0 \phi_1 \cdots \phi_{d-1})$.   
Then ${}^g(\xi\phi_0){}^g\phi_1\cdots {}^g \phi_{d-1}$ is a refactorization of $\phi_0\cdots \phi_{d-1}$, in the sense of \cite[Definition 5.3]{Murnaghan2011}, and so by \cite[Proposition 5.4]{Murnaghan2011}, for each $i>0$, $\prod_{j=i}^d \phi_j$ and $\prod_{j=i}^d {}^g\phi_j$ coincide on $G^i_{x,r_{i-1}+}$.  
The genericity of the characters $\phi_j$ recursively implies, as in the proof of \cite[Proposition 5.6]{Murnaghan2011}, that $g\in G^1$, and so ${}^g\phi_i=\phi_i$ for all $i\geq 1$.
Therefore our equality reduces to $\Res_{T_{r_0}} \phi_0 = \Res_{T_{r_0}} {}^g(\xi\phi_0) =  \Res_{T_{r_0}} {}^g\phi_0$, whence $g\in T$ by genericity.  Returning to the first equality we conclude that $\xi$ is the trivial character of $T$.
\end{proof}

\section{Stabilizers}\label{4}

 Let $S,Z,\apart=\apart(S), \Phi$ be as in Section~\ref{1}.  Given a subset $\Omega \subseteq \apart$, the subgroup of $G$ that fixes $\Omega$ pointwise is generated by $\Zb$ (the maximal bounded subgroup of the maximal torus $Z$) and those $G_\psi$ satisfying $\psi(z) \geq 0$ for each $z\in \Omega$ \cite[\S 6.4]{BruhatTits1972}.  For $x,y\in \apart$ we write $[x,y]$ for the geodesic from $x$ to $y$; then $G_{[x,y]} = G_x\cap G_y$.

The following result is a generalization of \cite[Prop 3.3]{Nevins2014}, and it allows us to relate Moy-Prasad filtration subgroups at $x$ to stabilizers of  its neighbourhoods in any apartment $\apart$ containing $x$.  Note that $\apart$ is the affine space under $X_*(S)\otimes_{\mathbb{Z}}\mathbb{R}$.  Given two points $x,z\in \apart$, we identify $z-x$ with a vector in $X_*(S)\otimes_{\mathbb{Z}}\mathbb{R}$, and then for each $\alpha \in \Phi$, $\alpha(z-x)$ is a well-defined real number.

\begin{lemma}\label{L:stabilizers}
Let $\apart=\apart(\Stor,\ratk)$ be an apartment in $\buil(\GG,\ratk)$ containing $x$ and let $\Phi=\Phi(\GG,\Stor)$ be the corresponding root system.  Let $s \geq 0$ and define 
$$
\Aset(x,s)=\{ z \in \apart \mid \forall \alpha \in \Phi, \alpha(z-x) \leq s\}.
$$
Then $G_{x,s} \subseteq G_{\Aset(x,s)} = G_{\overline{\Aset(x,s)}}$, where 
$\overline{\Aset(x,s)}$ denotes the simplicial closure of $\Aset(x,s)$ in $\apart$.
\end{lemma}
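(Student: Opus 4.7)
The argument naturally splits into two parts: the inclusion $G_{x,s} \subseteq G_{\Aset(x,s)}$ and the equality $G_{\Aset(x,s)} = G_{\overline{\Aset(x,s)}}$.

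For the inclusion, I would work directly from the defining generators of $G_{x,s}$. Since $Z_s \subseteq \Zb$ acts trivially on all of $\apart$, only the root subgroups $G_{\alpha,x,s} = \langle G_\psi \mid \dot\psi = \alpha,\ \psi(x) \geq s\rangle$ require attention. For an affine root $\psi$ with $\dot\psi = \alpha$ and $\psi(x) \geq s$, the group $G_\psi$ pointwise fixes the half-apartment $\{z \in \apart \mid \psi(z) \geq 0\}$, and the affine relation
\[
\psi(z) = \psi(x) + \alpha(z-x)
\]
reduces the inclusion to verifying $\alpha(z-x) \geq -s$ for every $z \in \Aset(x,s)$. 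Since $\Phi$ is symmetric, $-\alpha \in \Phi$, and the defining inequality of $\Aset(x,s)$ applied to $-\alpha$ yields exactly this bound; hence $\psi(z) \geq s + (-s) = 0$ and $G_\psi$ fixes $\Aset(x,s)$ pointwise.

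For the equality, the containment $\supseteq$ is automatic from $\Aset(x,s) \subseteq \overline{\Aset(x,s)}$. For $\subseteq$, I would invoke the Bruhat--Tits description of pointwise stabilizers of subsets of $\apart$ recalled at the start of the section: $G_{\Aset(x,s)}$ is generated by $\Zb$ together with those $G_\psi$ satisfying $\psi \geq 0$ on $\Aset(x,s)$. It therefore suffices to verify that every such $\psi$ is nonnegative on $\overline{\Aset(x,s)}$. This follows from the standard fact that each affine root has constant sign (strictly positive, strictly negative, or identically zero) on the relative interior of any facet of $\apart$: writing $\overline{\Aset(x,s)}$ as the union of the closed facets arising from (open) facets $F$ with $F \cap \Aset(x,s) \neq \emptyset$, and choosing a test point $z$ in any such intersection, nonnegativity of $\psi$ at $z$ forces $\psi \geq 0$ on all of $F$ and hence, by continuity, on $\overline{F}$.

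The primary obstacle is more bookkeeping than conceptual: one must fix conventions so that the simplicial closure decomposes as a union of closed facets coming from open facets that genuinely meet $\Aset(x,s)$, so that the sign-constancy argument for affine roots covers every point of $\overline{\Aset(x,s)}$. Once that is in place, the proof is simply a translation between pointwise stabilizers and affine-root inequalities.
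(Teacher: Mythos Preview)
Your proof is correct and matches the paper's approach for the inclusion $G_{x,s}\subseteq G_{\Aset(x,s)}$ essentially line for line: both reduce to showing that $\psi(x)\geq s$ forces $\psi(z)\geq 0$ for $z\in\Aset(x,s)$ via the affine relation and the symmetry of $\Phi$, together with $Z_s\subseteq\Zb$. For the equality $G_{\Aset(x,s)}=G_{\overline{\Aset(x,s)}}$, the paper simply cites \cite[2.4.13]{BruhatTits1972} (invoking that $G$ is semisimple and simply connected), whereas you unpack the argument directly from the generator description of pointwise stabilizers and the sign-constancy of affine roots on facets; this is precisely the content of the cited result, so the difference is one of citation versus self-contained verification rather than of method.
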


\begin{proof}
That $G_{\Aset(x,s)} = G_{\overline{\Aset(x,s)}}$ follows from \cite[2.4.13]{BruhatTits1972}, since $G$ is semisimple and simply connected.   
Let $z\in \Aset(x,s)$.  For each affine root $\psi$ such that $\psi(x)\geq s$, let $\alpha$ denote its gradient.  Then $\psi(x)-\psi(z) = \alpha(x-z) \leq s$ so $\psi(z) \geq \psi(x)-s \geq 0$.  Thus for all affine roots $\psi$, if $G_\psi \subset G_{x,s}$ then $G_\psi \subset G_{\Aset(x,s)}$.  Since also $Z_s \subseteq \Zb$, we conclude $G_{x,s}\subseteq  G_{\Aset(x,s)}$.  
\end{proof}

More generally, we may define
$$
\Bset(x,s) = \bigcup_{\apart \ni x} \Omega_\apart(x,s)
$$
which is the union over all apartments of $\buil(G)$ containing $x$.  By local compactness, this is reduced to a finite union.  Let $\overline{\Bset(x,s)}$ denote its simplicial closure.

\begin{definition}\label{Def:simplicialradius}
Let $\Bset \subset \buil(G)$ be a bounded convex set and suppose $x\in \Bset$.  For each apartment $\apart = \apart(\Stor,\ratk)$ containing $x$ define
$$
r(\Bset, \apart, x) = \sup\{\alpha(z-x) \mid z \in \Bset\cap\apart, \alpha \in \Phi(\GG,\Stor)\}.
$$
Then the \emph{simplicial radius} of $\Bset$ with respect to $x$ is defined to be
$$
c(\Bset,x)  = \sup\{r(\Bset, \apart, x) \mid x \in \apart\}.
$$
\end{definition}

As one motivating example, note that the simplicial radius of $\Bset(x,s)$ with respect to $x$ is $c(\Bset(x,s),x)=s$. For another, letting $\overline{\{x\}}$ denote the simplicial closure of $\{x\}$ in $\buil(G)$, we have $c(\overline{\{x\}},x)<1$ since 
$\overline{\{x\}}$ is constrained between adjacent affine root hyperplanes in any apartment containing $x$.  Note that in each of these examples we have the equality $c(\Bset,x)=r(\Bset,\apart,x)$ for each apartment $\apart$ containing $x$.

One can be slightly more precise about $c(\overline{\{x\}},x)$ when $x$ arises as the point identified with $\buil(T)$ from a datum $\Psi$.  In this case, $x$ is an \emph{optimal point} of $\buil(G)$, in the sense of \cite[\S6.1]{MoyPrasad1994}, whence the family of such values $c_x=c(\overline{\{x\}}, x)$ could be computed for any $G$.  For example, if $G=\SL_n(\ratk)$, then the optimal points are among the barycentres of the facets, whence if $x$ lies in a $k$-dimensional facet $\F$ then $c_x= 1-\frac1k$.

\section{Mackey decomposition and strategies for identifying types}\label{S:Mackey}

Let $y$ be a vertex of $\buil(G)$ and $G_y$ the corresponding maximal compact open subgroup of $G$.  Let $\Psi = (\vec{G},\vec{\phi})$ be a toral supercuspidal datum with $T=G^0$ and let $\pi = \pi(\Psi)$.  We are interested in the irreducible representations of $G_y$ occurring in $\Res_{G_y}\pi$.

Mackey theory gives a decomposition
$$
\Res_{G_y}\pi  \cong \bigoplus_{g \in G_y\backslash G/\KT}\Ind_{G_y\cap {}^g\KT}^{G_y} {}^g\kappa,
$$
where each \emph{Mackey component} $\tau(g):=\Ind_{G_y\cap {}^g\KT}^{G_y} {}^g\kappa$ is a finite-dimensional representation of $G_y$.  Note that since $G_y=N_G(G_y)$, the Mackey components are parametrized by a subset of the $G$-orbit of the vertex $y$ in $\buil(G)$. 
We emphasize that these Mackey components are not, in general, irreducible; a first strategy for identifying those that contain types is the following.

\begin{proposition}\label{P:type}
If $\KT \subseteq G_{g^{-1}y}$ then $(G_y,\tau(g))$ is a type for $\pi$.
\end{proposition}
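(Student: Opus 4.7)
The plan is to reduce the proposition to the elementary observations in the introduction, namely that $G$-conjugates of types are types and that compact induction behaves well in stages. First, I would exploit the hypothesis $\KT\subseteq G_{g^{-1}y}$ by conjugating: this is equivalent to ${}^g\KT\subseteq G_y$, whence $G_y\cap {}^g\KT={}^g\KT$, and the Mackey component simplifies to $\tau(g)=\Ind_{{}^g\KT}^{G_y}{}^g\kappa$.

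Next, I would compute the compact induction of $\tau(g)$ to $G$. Using induction in stages together with the standard identification $\cind_{{}^g\KT}^G{}^g\kappa\cong\cind_\KT^G\kappa$ of $G$-representations (the two are intertwined by conjugation by $g$), one obtains
\[
\cind_{G_y}^G\tau(g)\;\cong\;\cind_{{}^g\KT}^G{}^g\kappa\;\cong\;\cind_\KT^G\kappa\;=\;\pi.
\]
Since $G$ is semisimple and simply connected, the introduction records that the Bernstein block containing $\pi$ is generated by $\pi$ itself; hence $\pi$ is a projective generator of its block, and so is the isomorphic representation $\cind_{G_y}^G\tau(g)$. This is precisely the defining property of a type.

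I do not anticipate a serious obstacle: the argument is essentially a conjugation and induction-in-stages calculation, matching the paradigm described in the introduction for producing induced types on $G_y$. The only delicate point is that $\tau(g)$ need not be irreducible, but this is compatible with the paper's working definition, which already treats $(G_y,\Ind_{\KT}^{G_y}\kappa)$ (the case $g=e$) as an induced type for $\pi$.
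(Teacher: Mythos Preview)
Your argument is correct and is essentially the same as the paper's: both simplify the Mackey component using the hypothesis and then apply transitivity of compact induction to identify $\cind^G\tau(g)$ with $\pi$. The only cosmetic difference is that the paper first conjugates by $g^{-1}$ to work at $G_{g^{-1}y}$ with $(\KT,\kappa)$ and then conjugates back, whereas you work directly at $G_y$ with $({}^g\KT,{}^g\kappa)$; these are equivalent.
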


\begin{proof}
Consider instead the twisted Mackey component
$$
 {}^{g^{-1}}\tau(g) = {}^{g^{-1}}(\Ind_{G_y\cap {}^g\KT}^{G_y} {}^g\kappa) \cong \Ind_{G_{g^{-1}y}\cap \KT}^{G_{g^{-1}y}} \kappa
$$
which is a representation of $G_{g^{-1}y}$.  If $\KT\subseteq G_{g^{-1}y}$, then ${}^{g^{-1}}\tau(g) \cong \Ind_{\KT}^{G_{g^{-1}y}}\kappa$.  By the transitivity of compact induction, $\cind_{G_{g^{-1}y}}^G\tau' = \pi$, whence by Frobenius reciprocity $(G_{g^{-1}y},{}^{g^{-1}}\tau(g))$ is a type for $\pi$, and the result follows.
\end{proof}

On the other hand, the key strategy to discern Mackey components that cannot contain a type of $\pi$ is the following.

\begin{theorem} \label{T:inequiv}
Given $y, \Psi, g$ as above, suppose that there exists a nontrivial character $\xi$ of $T$ of depth less than $r_0$ such that its inflation $\overline{\xi}$ to $\KT$ is trivial on $\KT \cap G_{g^{-1}y}$.   
Then no irreducible subrepresentation of the Mackey component
$$
\Ind_{G_y \cap {}^g\KT}^{G_y} {}^g\kappa(\Psi)
$$
is a type.
\end{theorem}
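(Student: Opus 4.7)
The plan is to exploit Lemma~\ref{L:Murn} and Proposition~\ref{P:inequiv} to produce a second, inequivalent irreducible supercuspidal representation of $G$ whose Mackey decomposition contains $\tau(g)$ as its component at $g$ as well. An irreducible subrepresentation of $\tau(g)$ would then occur in the restrictions of two inequivalent supercuspidals, so by the characterization of types recalled in the introduction — valid because $G$ is semisimple and simply connected, so each Bernstein block consists of a single supercuspidal — no such subrepresentation can be a type.

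First I would set $\Psi_\xi = (\vec{G},(\xi\phi_0,\phi_1,\ldots,\phi_{d-1}))$. By Proposition~\ref{P:inequiv} the representation $\pi(\Psi_\xi)$ is irreducible supercuspidal and inequivalent to $\pi=\pi(\Psi)$, since $\xi$ is nontrivial; and by Lemma~\ref{L:Murn}, $\kappa(\Psi_\xi)=\overline{\xi}\,\kappa(\Psi)$ on $\KT$. Applying the Mackey decomposition to $\Res_{G_y}\pi(\Psi_\xi)$ over the \emph{same} double coset space $G_y\backslash G/\KT$ — legitimate because both representations are compactly induced from $\KT$ — the component at $g$ is
$$
\tau_\xi(g) = \Ind_{G_y\cap {}^g\KT}^{G_y} \bigl(({}^g\overline{\xi})\cdot {}^g\kappa(\Psi)\bigr).
$$

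The crux is that, by conjugating through $g$, the hypothesis ``$\overline{\xi}$ is trivial on $\KT\cap G_{g^{-1}y}$'' is equivalent to ``${}^g\overline{\xi}$ is trivial on $G_y\cap {}^g\KT$''. This absorbs the twist on the inducing subgroup and yields $\tau_\xi(g)=\tau(g)$. Any irreducible subrepresentation $\lambda$ of $\tau(g)$ therefore occurs in both $\Res_{G_y}\pi(\Psi)$ and $\Res_{G_y}\pi(\Psi_\xi)$, so $\lambda$ cannot be a type for $\pi$. I do not foresee a serious obstacle: Lemma~\ref{L:Murn} carries all the substantive content on the character side, and the only step to verify is the elementary conjugation identity relating the two subgroup-and-character pairs.
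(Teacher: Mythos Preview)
Your argument is correct and follows essentially the same approach as the paper: both use Lemma~\ref{L:Murn} to write $\kappa(\Psi_\xi)=\overline{\xi}\,\kappa(\Psi)$, observe that the hypothesis kills the twist on the relevant intersection so that the Mackey component at $g$ is common to $\pi(\Psi)$ and $\pi(\Psi_\xi)$, and then invoke Proposition~\ref{P:inequiv} to conclude. The only cosmetic difference is that the paper works with the $g^{-1}$-twisted component ${}^{g^{-1}}\tau(g)$ on $G_{g^{-1}y}$ rather than conjugating the character by $g$ as you do, and it phrases the final step as ``distinct inertial support'' rather than appealing to the characterization of types from the introduction.
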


\begin{proof}
The twisted Mackey component ${}^{g^{-1}}\tau(g)$ depends only $\Res_{G_{g^{-1}y}\cap \KT}\kappa(\Psi)$.  By Lemma~\ref{L:Murn}, $\kappa(\Psi_\xi)=\overline{\xi}\kappa(\Psi)$; by hypothesis 
$\Res_{G_{g^{-1}y}\cap\KT}\overline{\xi}$ is trivial.  
Thus 
$\Ind_{G_y \cap {}^g\KT}^{G_y} {}^g\kappa(\Psi)$ is a common component of both $\pi(\Psi)$ and $\pi(\Psi_\xi)$.  By Proposition~\ref{P:inequiv}, $\pi(\Psi)\not\cong\pi(\Psi_\xi)$.  Since these are inequivalent irreducible supercuspidal representations, they have distinct inertial support, and thus no irreducible representation of $G_y$ occurring in $\Ind_{G_y \cap {}^g\KT}^{G_y} {}^g\kappa(\Psi)$ can be a type.
\end{proof}

\section{Unicity results}\label{S:unicity}

We first identify the obvious types occurring in $\Res_{G_y}\pi$.

\begin{lemma} \label{L:obvious}
Suppose $y$ is a vertex of the facet containing $x$.  Then $(G_y,\tau(1))$ is a type. 
\end{lemma}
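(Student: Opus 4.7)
The plan is to deduce this directly from Proposition~\ref{P:type} applied with $g = 1$. According to that proposition, it suffices to verify that $\KT \subseteq G_{1^{-1}y} = G_y$.

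By construction in Section~\ref{S:construction}, we have the chain
$$
\KT = \KT^d = G^0_x G^1_{x,s_0}\cdots G^d_{x,s_{d-1}} \subseteq TG_{x,s_0}\subseteq G_x,
$$
so $\KT \subseteq G_x$. Thus I only need the parahoric containment $G_x \subseteq G_y$. Since $y$ is a vertex of the facet $\F$ containing $x$, and since the parahoric $G_x$ equals $G_{x,0}$ and depends only on $\F$ (it fixes $\F$ pointwise), every element of $G_x$ fixes $\F$ and therefore fixes $y \in \overline{\F}$. Hence $G_x \subseteq G_y$, giving $\KT \subseteq G_y$.

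Applying Proposition~\ref{P:type} with $g = 1$, we conclude that $(G_y, \tau(1))$ is a type for $\pi$. There is no real obstacle here; the only point that requires a word of justification is that the parahoric at a point in the interior of a facet is contained in the parahoric at any face of that facet's closure, which is a standard property of the Bruhat--Tits building under our running assumption that $G$ is simply connected and semisimple (so that point-stabilizers coincide with parahorics, as noted in Section~\ref{1}).
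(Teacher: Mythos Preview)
Your proof is correct and follows exactly the same approach as the paper: one checks $\KT \subseteq G_x$ from the construction, uses $y \in \overline{\F}$ to get $G_x \subseteq G_y$, and then applies Proposition~\ref{P:type} with $g=1$.
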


\begin{proof}
Let $\F$ be the facet containing $x$.  We have $\KT\subseteq G_x$.  Since $y \in \overline{\F}$ by hypothesis, $G_x \subseteq G_y$.  This implies that $\KT\subseteq G_y$, whence the first statement by Proposition~\ref{P:type}.  
\end{proof}

Now, continuing with the notation of the previous section, we identify $g\in G$ for which the associated Mackey components do not contain types.

\begin{proposition}\label{P:line}
Let $g \in G$. Suppose that there exists $z\in [x,g^{-1}y]\cap \overline{\Bset(x,s_0)}$ such that $T \not\subseteq G_z$.  Then no irreducible subrepresentation of $\tau(g)$
is a type.
\end{proposition}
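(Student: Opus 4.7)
The plan is to apply Theorem~\ref{T:inequiv}: the proposition will follow once I exhibit a nontrivial character $\xi$ of $T$ of depth strictly less than $r_0$ whose inflation $\overline{\xi}$ to $\KT$ is trivial on $\KT\cap G_{g^{-1}y}$.

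I first locate $z$ within a single apartment. Unpacking $\Bset(x,s_0)=\bigcup_{\apart\ni x}\Aset(x,s_0)$, the facet $\F$ of $\buil(G)$ containing $z$ in its closure meets some $\Aset(x,s_0)$ inside an apartment $\apart_0$ through $x$, and since a facet is determined by any of its points inside a containing apartment, $\overline{\F}$, and hence $z$, lies in $\overline{\Aset(x,s_0)}$ in $\apart_0$. Lemma~\ref{L:stabilizers} then yields $G_{x,s_0}\subseteq G_z$. Working in any apartment containing both $x$ and $g^{-1}y$, convexity gives $G_x\cap G_{g^{-1}y}=G_{[x,g^{-1}y]}\subseteq G_z$, and since $\KT\subseteq G_x$ this forces $\KT\cap G_{g^{-1}y}\subseteq \KT\cap G_z$. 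So it suffices to arrange that $\overline{\xi}$ vanishes on $\KT\cap G_z$.

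Next, I reduce this to a condition on $T$ alone via the Adler--Yu factorization $\KT\subseteq T\cdot G_{x,s_0}$. Any $k\in\KT$ decomposes as $k=tj$ with $t\in T$ and $j\in \J:=\J^1\cdots\J^d\subseteq G_{x,s_0}\subseteq G_z$, and the inflation formula (as in the proof of Lemma~\ref{L:Murn}) gives $\overline{\xi}(tj)=\xi(t)$; if $k\in G_z$, then $t=kj^{-1}\in T\cap G_z$. So it is enough to produce $\xi$ that is nontrivial on $T$, of depth $<r_0$, and trivial on $T\cap G_z$.

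For the construction: $T$ is compact as the $\ratk$-points of an anisotropic torus, $T_{r_0}$ is open, so $T/T_{r_0}$ is a finite abelian group. Since $x$ is the image of $\buil(T)$ in $\buil(G)$, $T_{r_0}\subseteq G_{x,r_0}\subseteq G_{x,s_0}\subseteq G_z$, and combining with the hypothesis $T\not\subseteq G_z$ yields $T_{r_0}\subseteq T\cap G_z\subsetneq T$. The nontrivial finite quotient $T/(T\cap G_z)$ therefore admits a nontrivial character, which pulls back to a character $\xi$ of $T$ that is nontrivial, trivial on $T\cap G_z$, and (being trivial on $T_{r_0}$) of depth $<r_0$; Theorem~\ref{T:inequiv} closes the argument. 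The step requiring the most care is the apartment extraction, which rests on the simplicial-closure conventions introduced in Section~\ref{4}; the remainder is bookkeeping combining the Moy--Prasad filtration with the Adler--Yu factorization.
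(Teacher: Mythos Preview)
Your proof is correct and follows essentially the same route as the paper's: both arguments apply Theorem~\ref{T:inequiv} after showing $\KT\cap G_{g^{-1}y}\subseteq (T\cap G_z)G_{x,s_0}$ via the two inclusions $G_{x,s_0}\subseteq G_z$ (from Lemma~\ref{L:stabilizers}) and $G_x\cap G_{g^{-1}y}\subseteq G_z$ (from $z\in[x,g^{-1}y]$), then pull back a nontrivial character of the finite quotient $T/(T\cap G_z)$. The only notable differences are cosmetic: the paper obtains the sharper bound $\mathrm{depth}(\xi)<s_0$ by observing $T_{s_0}\subseteq G_{x,s_0}\subseteq G_z$ directly (you use $T_{r_0}$, which still suffices for Theorem~\ref{T:inequiv}), and you are more explicit than the paper about passing from $\overline{\Bset(x,s_0)}$ to an apartment-level $\overline{\Aset(x,s_0)}$ before invoking Lemma~\ref{L:stabilizers}.
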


\begin{proof}
Let $\apart$ be an apartment of $\buil(G)$ containing $x$ and $g^{-1}y$; then $\apart$ contains the geodesic $[x,g^{-1}y]$.  Let $z\in [x,g^{-1}y]\cap \overline{\Aset(x,s_0)}$ as in the proposition.  Since $z$ is on the line $[x,g^{-1}y]$, whose pointwise stabilizer is $G_x\cap G_{g^{-1}y}$, we have that $G_x\cap G_{g^{-1}y} \subseteq G_z$.  At the same time, Lemma~\ref{L:stabilizers} implies that $G_{x,s_0}\subseteq G_{\overline{\Aset(x,s_0)}}\subseteq G_z$.  Thus
$$
\KT \cap G_{g^{-1}y} \subseteq TG_{x,s_0} \cap G_x \cap G_{g^{-1}y} \subseteq TG_{x,s_0}\cap G_z = (T\cap G_z)G_{x,s_0}.
$$
 Noting that $T_{s_0} \subseteq T\cap G_z$, we deduce that any character of $T$ that is trivial on $T\cap G_z$ has depth strictly less than $s_0$.  
 
By hypothesis $T\cap G_z \subsetneq T$; so let $\xi$ be a nontrivial character of $T$ that is trivial on $T\cap G_z$.  Since its depth is less than $s_0$, its inflation $\overline{\xi}$ is trivial on $(T\cap G_z)G_{x,s_0}$.  We deduce that $\xi$ satisfies the hypotheses of Theorem~\ref{T:inequiv}, whence the result.
\end{proof}

Let 
$$
\buil^T = \{z\in \buil(G) \mid T\subseteq G_z\}
$$
be the set of fixed points of $T$ acting on $\buil(G)$.  This is a convex subset of $\buil(G)$ containing $x$, and it is compact since $T$ is an anisotropic  maximal  torus.   Let $c_T$ denote the simplicial radius of $\buil^T$ with respect to $x$, as per Definition~\ref{Def:simplicialradius}.

Now let $\apart$ be an apartment containing $x$.  The hypothesis of Proposition~\ref{P:line} will be satisfied for all $g^{-1}y\in \apart$ if $\apart^T=\apart \cap \buil^T$ is contained in the interior of the simplicial closure $\overline{\Aset(x,s_0)}$ of $\Aset(x,s_0)$.   In particular this holds if $s_0> c_T \geq r(\buil^T,\apart,x)$.  

Putting these geometric ideas together yields our main theorem.

\begin{theorem}\label{T:unicity}
Let $T$ be a tamely ramified  anisotropic maximal torus of $G$ and let $s_0>c_T$.  Let $\Psi = (\vec{G},\vec{\phi})$ be any toral supercuspidal datum such that $G^0=T$ and such that the depth of $\phi_0$ is at least $2s_0$.  Then $\pi(\Psi)$ satisfies the conjecture of unicity of types relative to any maximal compact open subgroup of $G$.  If moreover $\buil^T$ is the closure of a single facet of $\buil(G)$ then $\pi(\Psi)$ has the property of strong unicity.
\end{theorem}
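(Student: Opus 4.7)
The approach is to fix any maximal compact open subgroup $G_y$ of $G$, apply the Mackey decomposition of Section~\ref{S:Mackey} to write $\pi|_{G_y}\cong\bigoplus_{g\in G_y\backslash G/\KT}\tau(g)$, and identify which components $\tau(g)$ can contain an irreducible type using Proposition~\ref{P:type} and Proposition~\ref{P:line}. The central dichotomy will be whether $g^{-1}y$ lies in $\buil^T$ or not, and the hypothesis $s_0>c_T$ is calibrated precisely so that these two possibilities correspond exactly to ``$\tau(g)$ yields a type'' and ``$\tau(g)$ contains no type.''

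In the first case, if $g^{-1}y\in\buil^T$, I would choose an apartment $\apart$ containing both $x$ and $g^{-1}y$. The definition of the simplicial radius gives $\alpha(g^{-1}y-x)\le r(\buil^T,\apart,x)\le c_T<s_0$ for every $\alpha\in\Phi(\GG,\Stor)$, whence $g^{-1}y\in\Aset(x,s_0)$. Lemma~\ref{L:stabilizers} then yields $G_{x,s_0}\subseteq G_{g^{-1}y}$, and combined with $T\subseteq G_{g^{-1}y}$ this gives $\KT\subseteq TG_{x,s_0}\subseteq G_{g^{-1}y}$, so Proposition~\ref{P:type} establishes that $(G_y,\tau(g))$ is a type.

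In the second case, if $g^{-1}y\notin\buil^T$, I would use convexity and closedness of $\buil^T$ to write $[x,g^{-1}y]\cap\buil^T=[x,z_0]$ as a proper subsegment of the geodesic. The inequality $c_T<s_0$ places $\buil^T\cap\apart$, in any apartment $\apart$ containing $x$ and $g^{-1}y$, strictly inside the Euclidean interior of $\Aset(x,s_0)$, so $z_0$ admits a neighborhood along $[x,g^{-1}y]$ still contained in $\Aset(x,s_0)$. I pick $z$ in this neighborhood past $z_0$: then $z\in\overline{\Aset(x,s_0)}\subseteq\overline{\Bset(x,s_0)}$ while $z\notin\buil^T$ by maximality of $z_0$, so $T\not\subseteq G_z$ and Proposition~\ref{P:line} rules out any irreducible type inside $\tau(g)$.

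Putting the cases together gives unicity: every type for $\pi$ on $G_y$ sits inside a first-case Mackey component, which upon twisting by $g^{-1}$ becomes a subrepresentation of the induced type $\Ind_{\KT}^{G_{g^{-1}y}}\kappa$ on a maximal compact open subgroup containing $\KT$. For strong unicity under the additional hypothesis $\buil^T=\overline{\F}$, the characterization recalled in the introduction reduces the question to whether $(\KT,\kappa)$ can be contained in two distinct $G$-conjugate maximal compact open subgroups; since $\KT\subseteq G_z$ is equivalent to $z\in\buil^T$ for a vertex $z$ (the containment $T\subseteq\KT$ gives the easy direction, and the first case above gives the other), this reduces to whether two distinct vertices of $\overline{\F}$ can be $G$-conjugate. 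In a simply connected semisimple group the vertices of a single facet carry distinct type-labels in the labeled-simplicial structure of the building, and distinct type-labels correspond to distinct $G$-orbits on vertices, so this never occurs and strong unicity follows. The step I expect to be the main obstacle is the geometric extraction of $z$ in the second case, which requires separating Euclidean interior from simplicial closure in $\overline{\Aset(x,s_0)}$ carefully enough to exploit $c_T<s_0$ sharply; the type-label input for strong unicity is classical but should be accompanied by a precise citation.
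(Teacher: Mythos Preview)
Your proposal is correct and follows essentially the same approach as the paper: the same Mackey decomposition, the same dichotomy on whether $g^{-1}y\in\buil^T$, the same use of Proposition~\ref{P:type} in the first case and Proposition~\ref{P:line} in the second, and the same type-label argument for strong unicity. Your treatment is in fact slightly more explicit geometrically (naming the boundary point $z_0$ and stepping past it), whereas the paper compresses that step into the one-line observation that $s_0>c_T$ forces $\Bset(x,s_0)$ to contain an open neighbourhood of $\buil^T$; your worry about separating Euclidean interior from simplicial closure is unnecessary, since $c_T<s_0$ already places $\buil^T\cap\apart$ in the Euclidean interior of $\Aset(x,s_0)$.
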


\begin{proof}
Let $g\in G_y\backslash G / \KT$.  Suppose first that $g^{-1}y \in \buil^T$, so that $T\subseteq G_{g^{-1}y}$.  Since $s_0>c_T$, we have $g^{-1}y \in \Bset(x,s_0)$ so $G_{x,s_0}\subseteq G_{g^{-1}y}$ as well.  Thus $\KT \subseteq TG_{x,s_0} \subseteq G_{g^{-1}y}$ and by the same argument as 
Lemma~\ref{L:obvious}, we conclude that $(G_{y},\tau(g))$ is a type for $\pi$.

Now suppose that $g^{-1}y \notin \buil^T$, so that $T \not\subset G_{g^{-1}y}$.  
Since $s_0>c_T$,  $\Bset(x,s_0)$ contains an open neighbourhood of $\buil^T$, so the line $[x,g^{-1}y]$ must meet $\Bset(x,s_0) \setminus \buil^T$ in at least one point $z$.  Using now Proposition~\ref{P:line}, we conclude that the corresponding Mackey component contains no types for $\pi(\Psi)$.

Finally, we note that if $\buil^T = \overline{\F}$ for a facet $\F\subset\buil(G)$, then since $G$ is semisimple simply connected, each orbit of a vertex $y$ in $\buil(G)$ meets $\buil^T$ at most once.  By the above arguments, this implies strong unicity.
\end{proof}

By \cite[3.6.1]{Tits1979}, if $T$ is an anisotropic maximal  torus which splits over an unramified extension, then $\buil^T$ consists of a single vertex, namely $\{x\}$, whence $c_T=0$.  Since all our toral supercuspidal representations have positive depth, we have the following immediate corollary.

\begin{cor}\label{C:unramified}
Let $T$ be an \emph{unramified}  anisotropic maximal torus of $G$.  Let $\Psi = (\vec{G},\vec{\phi})$ be any toral supercuspidal datum such that $G^0=T$.  Then $\pi(\Psi)$ satisfies  strong unicity of types relative to any maximal compact open subgroup of $G$.  
\end{cor}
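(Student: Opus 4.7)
The plan is to derive Corollary \ref{C:unramified} as a direct specialization of Theorem \ref{T:unicity}, so the only work is to verify that its hypotheses are automatic when $T$ is unramified. The essential input is the result of Tits \cite[3.6.1]{Tits1979} (already cited in the paragraph preceding the corollary): if $T$ splits over an unramified extension of $\ratk$, then the $T$-fixed subset $\buil^T$ of $\buil(G)$ reduces to the single point $x$ identified with the image of $\buil(T)$. With this in hand, the simplicial radius is immediate from Definition~\ref{Def:simplicialradius}: for any apartment $\apart$ through $x$ and any $\alpha\in\Phi(\GG,\Stor)$, the supremum $\sup\{\alpha(z-x)\mid z\in\buil^T\cap\apart\}$ is taken over the singleton $\{x\}$ and so equals $0$. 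Consequently $c_T=0$.

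Next I would verify the depth hypothesis of Theorem~\ref{T:unicity}. That theorem asks for a real $s_0>c_T$ with the depth of $\phi_0$ at least $2s_0$. Because $c_T=0$, any positive $s_0$ works for the first inequality. By the definition of a toral supercuspidal datum, $\phi_0$ has positive depth $r_0>0$, so choosing for instance $s_0=r_0/2$ yields $s_0>c_T$ and depth $r_0\geq 2s_0$ simultaneously. Thus Theorem~\ref{T:unicity} applies and $\pi(\Psi)$ satisfies the unicity conjecture relative to every maximal compact open subgroup of $G$.

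For strong unicity I would invoke the second clause of Theorem~\ref{T:unicity}: since $\buil^T=\{x\}$ is the simplicial closure of the zero-dimensional facet $\{x\}$ in $\buil(G)$, the hypothesis that $\buil^T$ equals the closure of a single facet is trivially satisfied, and strong unicity follows. There is no genuine obstacle here; the entire argument is a specialization of the main theorem. The only point worth flagging is that the notion of ``facet'' must be read in the standard Bruhat--Tits sense in which a vertex is itself a (zero-dimensional) facet whose closure is itself, so that the hypothesis of Theorem~\ref{T:unicity} covers this degenerate case as well.
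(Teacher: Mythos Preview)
Your proof is correct and follows essentially the same route as the paper: invoke Tits \cite[3.6.1]{Tits1979} to get $\buil^T=\{x\}$ a single vertex, deduce $c_T=0$, note $r_0>0$ makes the depth hypothesis vacuous, and apply both clauses of Theorem~\ref{T:unicity}. The paper compresses this into the sentence immediately preceding the corollary, but the content is identical.
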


\section{The inequivalence of unicity and strong unicity}\label{S:nonunicity}

Recall that strong unicity of types is the statement that $\Res_{K}\pi$ should contain at most one type, for each choice of maximal compact open subgroup $K$.   We have the following converse to the strong unicity statement in Theorem~\ref{T:unicity}.  Note that this result is without a condition on depth.

\begin{lemma}\label{L:strongunicityfails}
Let $T$ be a tamely ramified anisotropic maximal  torus.  If $\buil^T$ contains two distinct but $G$-conjugate vertices $y$ and $y'$ of alcoves whose closure contain $x$, then for any toral supercuspidal datum $\Psi = (\vec{G},\vec{\phi})$ such that $G^0=T$, the strong unicity property fails to hold for $\pi(\Psi)$, that is, there exist at least two nonisomorphic types in $\Res_{G_y}\pi(\Psi)$.
\end{lemma}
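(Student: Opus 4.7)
The plan is to exhibit two inequivalent irreducible types on $G_y$ appearing as distinct Mackey components of $\Res_{G_y}\pi(\Psi)$, directly contradicting strong unicity. Fix $g_0 \in G$ with $y' = g_0^{-1}y$, noting that $g_0 \notin G_y$ since $y \neq y'$. The central technical claim is that $\KT \subseteq G_y$ and symmetrically $\KT \subseteq G_{y'}$. Crucially, this must be established without any hypothesis on the depth $r_0$, so in contrast with Theorem~\ref{T:unicity} we cannot appeal to any bound of the form $s_0 > c_T$.

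To prove $\KT \subseteq G_y$, since $\KT \subseteq T G_{x, s_0}$ it suffices to show $T \subseteq G_y$ and $G_{x, s_0} \subseteq G_y$. The first is immediate from $y \in \buil^T$. For the second, I would work in an apartment $\apart$ containing the alcove $\F_y$ of which $y$ is a vertex and in whose closure $x$ lies. Because $\Phi$ is finite and $s_0 > 0$, the set $\Aset(x, s_0) \cap \apart$ contains an open neighbourhood of $x$ in $\apart$, so it meets the top-dimensional open facet $\F_y$ nontrivially; by the definition of simplicial closure, this forces $\overline{\F_y} \subseteq \overline{\Aset(x, s_0)}$, and in particular $y \in \overline{\Aset(x, s_0)}$. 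Lemma~\ref{L:stabilizers} then yields $G_{x, s_0} \subseteq G_{\overline{\Aset(x, s_0)}} \subseteq G_y$. This depth-uniform geometric input is the main substantive point of the argument and is precisely where the hypothesis ``vertex of an alcove containing $x$ in its closure'' is used.

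With both inclusions in hand, Proposition~\ref{P:type} produces two types on $G_y$, namely $\tau(1) = \Ind_\KT^{G_y}\kappa(\Psi)$ and $\tau(g_0) = \Ind_{{}^{g_0}\KT}^{G_y}{}^{g_0}\kappa(\Psi)$. Since $\KT$ fixes both $y$ and $y'$ pointwise, their $\KT$-orbits in the $G$-orbit of $y$ reduce to the distinct singletons $\{y\}$ and $\{y'\}$, so $1$ and $g_0$ lie in different double cosets of $G_y \backslash G / \KT$, and $\tau(1), \tau(g_0)$ occur as distinct Mackey summands of $\Res_{G_y}\pi(\Psi)$. To deduce that they are inequivalent as representations of $G_y$, I would invoke multiplicity one: transitivity of compact induction gives $\cind_{G_y}^G \tau(1) \cong \cind_\KT^G \kappa(\Psi) \cong \pi(\Psi)$, so by Frobenius reciprocity $\tau(1)$, and likewise $\tau(g_0)$, occurs in $\Res_{G_y}\pi(\Psi)$ with multiplicity exactly one; an isomorphism $\tau(1) \cong \tau(g_0)$ would produce a common irreducible summand of multiplicity at least two, a contradiction.
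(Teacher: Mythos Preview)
Your proposal is correct and follows essentially the same route as the paper's proof. The paper argues more tersely, noting that $y,y'\in\overline{\Bset(x,s_0)}$ for any $s_0>0$ and then invoking ``the proof of Theorem~\ref{T:unicity}'' to obtain $\KT\subseteq G_y$ and $\KT\subseteq G_{y'}$; you unpack precisely this step via Lemma~\ref{L:stabilizers} and the observation that the simplicial closure of $\Aset(x,s_0)$ swallows any alcove touching $x$. Your explicit verification that $1$ and $g_0$ lie in distinct double cosets (via $\KT$-orbits on $G\cdot y$) and the multiplicity-one Frobenius argument match the paper's concluding paragraph exactly.
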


\begin{proof}
Let $x \in \buil(T) \subseteq \buil(G)$ and set $\pi=\pi(\Psi)$.  Let  $y\neq y'$ be $G$-conjugate vertices of two distinct alcoves $\C, \C'$ of $\buil(G)$ such that $x \in \overline{\C}\cap \overline{\C'}$.  Then for any $s_0>0$, we have that $y,y' \in \overline{\Bset(x,s_0)}$.  Let $g \in G$ be such that $y'=g^{-1}y$.  If both $y$ and $g^{-1}y$ lie in $\buil^T$, then by the proof of Theorem~\ref{T:unicity}, we conclude that both $(G_y,\tau(1))$ and $(G_{y},\tau(g))$ are types for $\pi$ occurring in $\Res_{G_y}\pi$.  Note that although these types are induced from the $G$-conjugate types $(\KT,\kappa)$ and $({}^g\KT,{}^g\kappa)$, they are not themselves $G$-conjugate since $g\notin N_G(G_y)=G_y$.
Neither are these types isomorphic, since $\pi \cong \cind_{G_y}^G\tau(1)$ implies
$$
\mathbb{C}\cong\Hom_{G}(\pi,\pi) \cong \Hom_{G_y}(\tau(1),\Res_{G_y}\pi)
$$
by Frobenius reciprocity.
\end{proof}

We next prove the existence of pairs $(G,T)$ satisfying the hypotheses of Lemma~\ref{L:strongunicityfails} with an example such that $G$ has rank $2$.  We thank Jeff Adler for providing this instructive example of a torus that stabilizes more than the closure of a single facet.

\begin{example}\label{Ex:1}
Consider $G=\Sp_4(\ratk)$, given in matrix form relative to $J=\left[\begin{smallmatrix}0&I\\-I&0\end{smallmatrix}\right]\in M_4(\ratk)$ 
as the set
$$
G = \{ g \in \GL_4(\ratk)\mid {}^tgJg=J\}
$$
where ${}^tg$ denotes the transpose.  Note that $G$ contains a generalized Levi subgroup $G'$ isomorphic to $\SL_2(\ratk)\times \SL_2(\ratk)$, defined by the long roots.

Let $\p$ be a uniformizer of $\ratk$.  For $i=1,2$, we choose an anisotropic torus $T_i$ of $\mathrm{SL}_2(\ratk)$ that isomorphic to the norm-one elements of some quadratic ramified extension $\extk_i=\ratk[\sqrt{\gamma_i\p}]$ with $\nu(\gamma_i)=0$.  Let $T=T_1\times T_2$ be the corresponding anisotropic torus of $G'$; explicitly we embed $T$ into $G$ as the subgroup
$$
T=\left\{ 
\bmat{a &0 &\phantom{b}b\phantom{b}&\phantom{b}0\phantom{b}\\ 
0&c&0&d\\ 
b\gamma_1\p &0&a&0\\ 0&d\gamma_2\p & 0& c}
\;\; \middle| \;\;\parbox{1.2in}{$a,b,c,d \in \R, \quad\quad$ 
$a^2-b^2\gamma_1\p = 1$, $c^2-d^2\gamma_2\p=1$}\right\}.
$$
For each $i$, the point in the building $\buil(\SL_2,\ratk)$ corresponding to $T_i$ is the midpoint of the fundamental alcove.  Thus the point $x$ representing $\buil(T)\subset \buil(G)$ is the midpoint of the diagonal facet $\F$ in the closure of the fundamental alcove $\C$ in the standard apartment $\apart$ of $\buil(G)$, as illustrated in Figure~\ref{Fig:1}.  Evidently $\overline{\F} \subseteq \apart^T$.

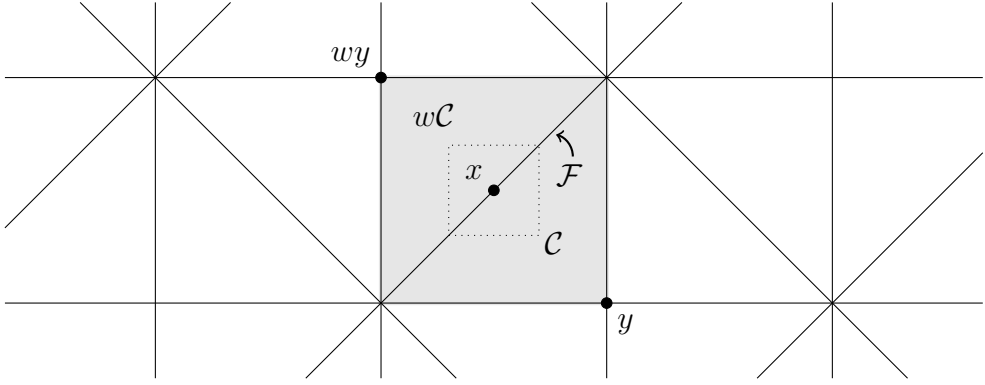
\begin{figure}[ht]
\begin{tikzpicture}
\draw[fill=gray!20,ultra thick, gray!20] (1,1) rectangle (4,4);

\draw (-4,1)--(9,1);
\draw (-4,4)--(9,4);
\draw (1,0)--(1,5);
\draw (4,0)--(4,5);
\draw (-2,0)--(-2,5);
\draw (7,0)--(7,5);
\draw (0,0)--(5,5);
\draw (-3,5)--(2,0);
\draw (3,5)--(8,0);
\draw (6,0)--(9,3);
\draw (-4,2)--(-1,5);

\draw[fill] (2.5,2.5) circle [radius=2pt];
\node[above left]  at (2.5,2.5) {$x$};

\draw[fill] (4,1) circle [radius=2pt];
\node[below right]  at (4,1) {$y$};

\draw[fill] (1,4) circle [radius=2pt];
\node[above left]  at (1,4) {$wy$};

\node at (3.3,1.8) {$\mathcal{C}$};
\node at (1.7,3.5) {$w\mathcal{C}$};

\node at (3.5,2.7) {$\F$};
\draw[thick,->] (3.55,2.95) arc (0:75:0.3cm);

\draw[dotted] (1.9,1.9) rectangle (3.1,3.1);
\end{tikzpicture}
\caption{A portion of the standard apartment $\apart$ of $\Sp_4(\ratk)$, identifying an alcove $\mathcal{C}$, the point $x$ that is the image of $\buil(T)$ in $\buil(G)$, the facet $\F$ containing $x$ and the vertex $y$ of $\C$ not in $\overline{\F}$.  The images of $y$ and of $\C$ under a reflection $w$ in the Weyl group are also indicated.  The subset $\apart^T$ of $\apart$ fixed by $T$ is the closed region shaded in gray.  For sake of example, the set $\Aset(x,s_0)$, with $s_0=\frac{1}{10}$, is indicated with a dotted line; its closure is $\apart^T$.}
\label{Fig:1}
\end{figure}

Let us now prove that $\apart^T \neq \overline{\F}$.  Let $y$ denote the (non-special) vertex of $\overline{\C}$ opposite $\F$, and $w\in G$ a reflection in the wall of $\apart$ containing $\F$ (viewed as a representative of the corresponding element of the affine Weyl group).  Adopting the convention that a matrix ring stands for its intersection with $\Sp_4(\ratk)$, we compute directly that
\begin{equation}\label{E:GCGy}
G_{\C} = \bmat{\R & \R & \R & \R \\ \PP  & \R & \R&\R \\ \PP &\PP & \R&\PP\\ \PP&\PP&\R&\R}
\quad \text{and} \quad
G_{w\C} = 
\bmat{\R & \PP & \R & \R \\ 
\R  & \R & \R&\R \\ 
\PP &\PP & \R&\R\\ 
\PP&\PP&\PP&\R},
\end{equation}
each of which contains $T$ as a subgroup.  It therefore follows that $\{y,wy\} \subset \overline{\C}\cup \overline{w\C}\subseteq \apart^T$, even though neither $y$ nor $wy$ lie in $\overline{\F}$.
Applying now Lemma~\ref{L:strongunicityfails}, we deduce that strong unicity fails for any supercuspidal representation $\pi(\Psi)$ constructed from $G^0=T$.
\end{example}

\begin{remark}\label{R:ex}
In the setting of the preceding example, we can conclude slightly more.  First note that if $z\in \apart$ and $T\subseteq G_z$, then for each positive long root $\alpha$, we must have $0\leq \alpha(z)\leq 1$.  It follows that $\apart^T = \overline{\C}\cup\overline{w\C}$, as indicated in Figure~\ref{Fig:1}.  We claim that in fact $\buil^T=\apart^T$.

Since $G_x\cap G_y=G_\C$, the orbit of $y$ is parametrized by the set $G_x/G_\C \simeq \GL_2(\R)/\mathcal{I}$, where $\mathcal{I}$ denotes its Iwahori subgroup.  A set of representatives is
$$
\left\{\overline{a} = \bmat{1&a&&\\0&1&& \\ && 1&0\\&&-a&1}, w= \bmat{0&1&&\\-1&0&&\\&&0&1\\&&-1&0} \;\;\middle|  \;\;a \in \R/\PP \right\}.
$$
One can verify directly that for each $a \in \R^\times$, ${}^{\overline{a}} T\not\subseteq G_y$.  Thus since the fixed point set $\buil^T$ is closed and convex but contains no vertices of chambers adjacent to $x$ outside of $\apart$, we conclude that $\buil^T=\overline{\mathcal{C}}\cup \overline{w\mathcal{C}}=\apart^T$, which is contained entirely in the standard apartment.   Therefore we can compute the simplicial radius of $\buil^T$ with respect to $x$, yielding $c_T=\frac12$.  

We had see that $\tau(1)$ and $\tau(w)$ gave types.  For $a\in \R^\times$, since the choice $z=\overline{a}^{-1}y$ satisfies the hypotheses of  Proposition~\ref{P:line} for any $s_0>0$, we deduce that the corresponding Mackey components $\tau(\overline{a})$ do not contain types.  For the remaining Mackey components, $g^{-1}y$ is not in the closure of an alcove adjacent to $x$, and Theorem~\ref{T:unicity} implies that if $s_0>c_T=\frac12$, any corresponding toral supercuspidal representation contains exactly the two types on $G_y$ identified above.
\end{remark}

Example~\ref{Ex:1} generalizes immediately, to give families of supercuspidal representations for which the number of distinct types supported on a given non-special maximal compact subgroup grows exponentially with the rank of $G$.

\begin{example} \label{Ex:2}  Let $n\geq 2$ and consider the subgroup $G' \cong \SL_2(\ratk)^n$ of $G=\Sp_{2n}(\ratk)$ generated by the root subgroups corresponding to long roots.  To allow us to be explicit, let the roots of $\Sp_{2n}$ with respect to the diagonal torus $\Stor$ be the set $\Phi=\{\ep_i\pm \ep_j, \pm2\ep_i \mid 1\leq i \neq j \leq n\}$, with simple system $\Delta=\{\ep_i-\ep_{i+1}, 2\ep_n\mid 1\leq i < n\}$.   With respect to the basis $\{e_1,\ldots, e_n\}$ of $X_*(\Stor)\otimes_{\mathbb{Z}}\mathbb{R}$ (whose affine space is $\apart$), dual to $\{\ep_1, \ldots, \ep_n\}$,  the vertices of the fundamental alcove $\C$ are $v_i = \sum_{j=1}^i \frac12e_j$, for $0\leq i \leq n$.  Let $W=W_{\GG}$ denote the Weyl group of $\GG$ relative to $\Stor$.

In $\SL_2(\ratk)$, there are two conjugacy classes of unramified anisotropic tori, attached to the distinct conjugacy classes of vertices in $\buil(\SL_2,\ratk)$.  There are between 2 and 4 conjugacy classes of ramified anisotropic tori, attached to the midpoint of facets; see \cite{Nevins2013}, for example.  With respect to the coordinates above, the roots of each $\SL_2(\ratk)$ subgroup are $\pm 2\ep_i$, and thus up to conjugacy we can arrange that the vertices have $e_i$-coordinates in $\{0,\frac12\}$, whereas the midpoints have $e_i$-coordinate $\frac14$.

Let each of $T_1,\ldots, T_n$ represent an anisotropic torus of $\SL_2(\ratk)$, ordered so that: for $1\leq i\leq m$, $T_i$ is unramified and attached to $\frac12$; for $m+1\leq i \leq m+\ell$, $T_i$ is ramified and attached to $\frac14$; and for $i>m+\ell$, $T_i$ is again  unramified, but attached to $0$.  Then $T=T_1\times \cdots \times T_n$ embeds as an anisotropic maximal  torus of $G$, and $\{x\} = \buil(T)\subseteq \buil(G)$ has coordinates
$$
x = \sum_{j=1}^m \frac12e_j + \sum_{j=m+1}^{m+\ell} \frac14e_j.
$$
This is the midpoint of the $1$-dimensional facet $\F$ whose closure is the geodesic $[v_m, v_{m+\ell}]$.  Let $\Omega = \{\sum_{j=1}^m \frac12e_j +  \sum_{j=m+1}^{m+\ell} a_je_j \mid 0\leq a_j \leq \frac12\}$; then $\F \subseteq \Omega$ and one can verify directly that $\Omega = \{z \in \apart \mid T\subseteq G_z\}=\apart^T$ using the argument of Remark~\ref{R:ex}.  Note that since every maximal facet of $\Omega$ has $x$ in its closure,  $\Omega \subseteq \overline{\Aset(x,s)}$ for any $s>0$.

Note that $\overline{\F} \neq \Omega$ if and only if $\ell>1$.  In this case, let $W^\Omega = \{ w\in W\mid w\Omega \subseteq \Omega\}$ and let $W_{\Omega} = \{ w\in W\mid \forall z\in \Omega, wz=z \}$.  The elements of $W(T) := W^\Omega/W_{\Omega}$ permute the vertices of $\Omega$.  Let $\GG' = \Sp_{2m}\times \SL_2^\ell \times \Sp_{2n-2m-2\ell}\subseteq \GG$; then we can identify $W(T)$ with $W_\GG/W_{\GG'}$.  
The vertices of $\Omega\cap \C$ are $v_{m}, \ldots, v_{m+\ell}$.  The vertices $v_m$ and $v_{m+\ell}$ are fixed by each element of $W(T)$, but the orbit of $y=v_{m+t}$ under $W(T)$, for $0<t<\ell$, contains $\left(\begin{smallmatrix} \ell\\ t\end{smallmatrix}\right)$ distinct vertices of $\Omega$.  

Let $\Psi$ be any toral supercuspidal datum such that $G^0=T$, and $\pi=\pi(\Psi)$.  By the argument of the proof of Lemma~\ref{L:strongunicityfails}, for each vertex $y=v_{m+t}$ with $0<t<\ell$, $\Res_{G_y}\pi$ contains $\left(\begin{smallmatrix} \ell\\ t\end{smallmatrix}\right)>1$ inequivalent types.
\end{example}

\begin{remark}
The quadratic tori of Examples~\ref{Ex:1} and \ref{Ex:2} are the smallest of a broad class of tori to which the preceding arguments apply.  For example, using L.~Morris's classification of anisotropic maximal  principal tori of $\Sp_{2n}(\ratk)$ in \cite{Morris1991} one can explicitly construct products of tori of arbitrary rank, obtaining analogous results.  Similar constructive arguments may be made for groups of type $B_n$ and $D_n$.  
\end{remark}

The question of determining the fixed points of an anisotropic maximal  torus $T$ acting on $\buil(G)$ was partially addressed by F.~Hurst in his thesis \cite{Hurstthesis}.  
Under the hypotheses that $\GG$ is simple, connected and split over $\ratk$ (but not of type $E_7$ or $E_8$), that $T$ splits over a purely tamely ramified cyclic Galois extension, and some mild assumptions on $\ratk$, Hurst proves that $\buil^{T_{0+}} = \overline{\{x\}}$ if and only if $T$ is a Coxeter torus (in which case $x$ lies in an alcove) \cite[Satz 13.14]{Hurstthesis}.  For other $T$, he shows that $x$ lies on the wall of an alcove.  Then it follows that $\buil^{T_{0+}}$ contains the closure of the $G_x$-orbit of this alcove in $\buil(G)$, since $T_{0+}\subseteq G_{x,0+} \subseteq G_{\C}$ for all alcoves $\C$ adjacent to $x$.  (This is deduced via more explicit arguments in \cite[Satz 13.15]{Hurstthesis}.)

F.~Hurst computes a range of examples in \cite[\S13]{Hurstthesis}, including of particular interest one in type $F_4$ (labeled $6_p6_p$) where $T_{0+}$ fixes pointwise an alcove that is \emph{not} adjacent to $x$.  Hence, this example may yield a torus $T$ with a more interesting set of fixed points $\buil^T$, and consequently be an unusual example to explore.

\bibliographystyle{amsplain}
\bibliography{MNevinsPLatham}

\end{document}